%
%
%
%
\documentclass[12pt]{amsart}

\usepackage[pdftex,hyperindex]{hyperref} 
\usepackage{comment} 
\usepackage{tikz-cd}
\usepackage{tikz}
\usetikzlibrary{positioning, arrows}
\usepackage{amsmath,amsfonts,amstext,amssymb, pifont, stmaryrd,mathabx, relsize}
\usepackage[all,2cell]{xy}
\usepackage{lipsum}
\usepackage{tcolorbox} 
\usepackage{graphicx} 
\usepackage{xcolor}

\setlength{\textheight}{23cm} 
\setlength{\textwidth}{16cm} 

\setlength{\hoffset}{-1 cm} 
\setlength{\oddsidemargin}{1cm} 
\setlength{\evensidemargin}{1cm} 

\setlength{\voffset}{0in} 
\setlength{\topmargin}{-1cm} 

\setlength{\marginparwidth}{0pt} 
\setlength{\marginparsep}{0pt} 

\newtheorem{theorem}{Theorem}[section]
\newtheorem{lemma}[theorem]{Lemma}

\newtheorem{proposition}[theorem]{Proposition}
\theoremstyle{definition}
\newtheorem{definition}[theorem]{Definition}

\theoremstyle{remark}
\newtheorem{remark}[theorem]{Remark}

\numberwithin{equation}{section}

\newenvironment{smatrix} 
  {\left(\begin{smallmatrix}}
  {\end{smallmatrix}\right)
}



\newcommand{\field}{\mathsf{k}}
\newcommand{\dg}{\text{DG}}
\newcommand{\dgcat}{\text{DG-Mod}}
\newcommand{\dga}{\text{DGA}}
\newcommand{\dgacat}{\text{DGA-Mod}}

\newcommand{\sset}{\textbf{sSet}}
\newcommand{\set}{\textbf{Set}}
\newcommand{\smodcat}{\mathbb{S}\text{-Mod}}

\begin{document}

\title{E-infinity structures over $\mathcal{L}$-algebras}



\author[Jes\'us S\'anchez Guevara]{Jes\'us S\'anchez Guevara}


\address{Escuela de Matem\'aticas, Universidad de Costa Rica}
\email{jesus.sanchez\_g@ucr.ac.cr}
\thanks{Centro de Investigaci\'on en Matem\'atica Pura y Aplicada CIMPA-UCR}


\subjclass[2020]{18M60, 18M70, 55U15} 




\keywords{Chain complexes, $\mathcal{L}$-algebras, Operad theory, $E_\infty$-coalgebras, Barrat-Eccles operad}

\begin{abstract}
	In this paper we introduce the concept of $\mathcal{L}$-algebras,
	which can be seen as a generalization of 
	the structure determined by 
	the Eilenberg-Mac lane transformation 
	and Alexander-Whitney diagonal 
	in chain complexes.
	In this sense, our main result states that
	$\mathcal{L}$-algebras are endowed with an
	$E_\infty$-coalgebra struture,
	like the one determined by the Barrat-Eccles
	operad in chain complexes.
	This results implies that 
	the canonical $\mathcal{L}$-algebra of spaces 
	contains as much homotopy information as 
	its usually associated $E_\infty$-coalgebras,
	suggesting $\mathcal{L}$-algebras 
	as a tool for the study of homotopy types.

\end{abstract}

\maketitle

\tableofcontents

\section{Introduction}
\label{sec-introduction}

The central notion of this paper is the algebraic structure called $\mathcal{L}$-algebra. 
Introduced by Alain Prouté in several talks since the eighties and never published
(Max Planck Institut-Bonn 1986, 
Louvain-la Neuve 1987, 
Freie Universität-Berlin 1988,
Seminar Keller-Maltsiniotis-Paris 2010),
$\mathcal{L}$-algebras have been thought to be highly related 
to the homotopy type of spaces by
using an internal structure that models 
the diagonals which determines invariants
like Steenrod operations. 
$\mathcal{L}$-algebras are also inspired in Segal's $\Gamma$ structures (see \cite{SEGAL1974293}),
and can be thinking as a kind of co-version of $\Gamma$ spaces.

Eilenberg-Mac~Lane
transformation plays a central role in $\mathcal{L}$-algebras, where it is
the prototype (motivation) for the product of $\mathcal{L}$-algebras.
It is interesting to notice the existence of a preprint 
of Tom Leinster (see \cite{2000math2180L}), which present a similar object.

Using a homotopy inverse of the structural quasi-isomorphism $\mu$ of an $\mathcal{L}$-algebra $A$ we can
define a coproduct on its main element $A[1]$. Indeed, we only have to take the composition
of an homotopy inverse of $\mu:A[1]\otimes A[1]\to A[2]$ with the morphism $s_1:A[1]\to A[2]$.
Observe that this coproduct in general is not associative. But, the structure of $\mathcal{L}$-algebra
makes this coproduct associative and commutative up to homotopy. Moreover, the homotopies
also satisfy to be associative and commutative up to homotopy, and this property is
maintained on the next levels of homotopies, generating a system of higher homotopies. 
The classical case where this happens is in the context of
chain complexes associated to a simplicial set. 

Higher homotopies can be organized into an $E_\infty$-coalgebra. 
Such a structure was exhibited in particular by Smith 
(see \cite{smith1994iterating}) and alternatively 
by S\'anchez-Guevara using a simplification of Smith's operad (see \cite{sanchezguevara2021einfinity}).
We will generalize these descriptions in the context of $\mathcal{L}$-algebras
with values in the category $\dga$-Mod,
in other words, we will prove that the main element of an $\mathcal{L}$-algebra $A$
is equipped with an $E_\infty$-coalgebra structure describing the system
of higher homotopies associated to the coproducts induced by the structural quasi-isomorphism of $A$.

The main difference with the case of chain complexes associated to a simplicial set,
where the process begins with the Alexander-Whitney diagonal, which is an associative
coproduct, is that in general we don't have the associativity. Then, in order
to model the higher homotopies we have to consider an $E_\infty$-operad that will have
several generators in degree $0$, and not only one like the operads $\mathfrak{S}$ and $\mathcal{R}$. 
In the section \ref{sec-construction-operad-K}, we will construct an $E_\infty$-operad
that we denote $\mathcal{K}$. The construction is made by infinitely many steps, in the
sense that we construct a sequence of operads $\{\mathcal{K}_n\}_{n\geq 2}$, in 
such a way that $\mathcal{K}_i$ is a suboperad of $\mathcal{K}_{i+1}$. The operads
$\mathcal{K}_i$ are not $E_\infty$-operads, but they will be almost $E_\infty$-operads,
in the sense that until arity $i$ they will satisfy the $E_\infty$-conditions. Finally, the 
$E_\infty$-operad $\mathcal{K}$ is obtained by taking the inductive limit of this sequence of operads.

One of the characteristics of this construction is the use of a technique that
we call polynomial operads. It will create a new operad from an $\mathbb{S}$-module
containing an $\mathbb{S}$-submodule with an operadic structure, in such a way that this
operadic structure is preserved in the resulting operad. This done by using 
amalgamated sums in the category of operads. The section \ref{sec-polynomial-operads}
is completely dedicated to the description of this technique. 

In the final part we exhibit the main element $A[1]$ of an $\mathcal{L}$-algebra
as a $\mathcal{E}_\infty$-coalgebra. Again, this will be possible due to the sequence of 
operads that define $\mathcal{K}$, in the sense that it will be sufficient to exhibit
$A[1]$ as a $\mathcal{K}_i$-coalgebra for each $i$, because the universal property
of colimits will induce the $\mathcal{K}$-coalgebra structure on $A[1]$. 
Moreover, our construction that $A[1]$ is a $E_\infty$-coalgebra is functorial. 
This proves that an $\mathcal{L}$-algebra quasi-isomorphic to $\mathcal{A}(X)$
contains at least as many homotopy information as a $E_\infty$-coalgebra structure
on $C_*(X)$, such as the one described by Smith.

In \cite{Mandell2006CHT}, Mandell describes an $E_\infty$-algebra structure
on the normalized cochain complex associated to a simplicial space,
which under some finiteness hypothesis gives an invariant for the 
weak homotopy type of the space. Our results suggest that $\mathcal{L}$-algebras
are also pertinent in order to describe the weak homotopy type of spaces.

\section{Preliminaries}
\label{preliminaries}

	Let $\Lambda$ be a commutative ring with unity,
	a differential graded module over $\Lambda$ or simply $\dg$ module is
	a graded $\Lambda$-module $M$ together with 
	a homogeneous morphism $\partial:M\to M$ of degree $-1$, 
	called differential, such that $\partial^{2}=0$.
	When provided with homogeneous morphisms 
	$\epsilon:M\to 	\field$ and $\eta:\field \to M$, 
	called augmentation and coaugmentation, respectively,
	such that $\epsilon \circ \eta =\text{id}$, $M$ 
	is said to be a differential graded module with augmentation or $\dga$ module.
	The category of $\dg$ modules and $\dga$ modules, both with
	homogeneous morphisms, is denoted $\dgcat$ and $\dgacat$, respectively.
	Signs in this graded context follows Koszul convention (cf. \cite{loday2012algebraic}).

	$M$ is acyclic if its augmentation $\epsilon:M\to \Lambda$ induces an isomorphism in homology,
	and $M$ is said to be contractible if $\epsilon$ is a homotopy equivalence
	(cf. \cite{Alain-Ainf}).	
	A contracting chain homotopy of $M$, is a degree $1$ morphism $h:M\to M$
	homotopy from $0$ to $\text{Id}_M$. In other words, $h$ satisfies,
	$\partial h+h\partial=1$.
	If $H_*(M)=0$, $M$ is said to be null-homotopic.
	A morphism $f:M\to N$ of degree $k$ is said null-homotopic if 
	it is homotopic to $0$, that is, there is a morphism
	$h:M\to N$ of degree $k+1$ such that $\partial(h)=\partial h-(-1)^kh\partial=f$.
		
	When $M$ has a contracting chain homotopy then $M$ is null-homotopic,
	and as a converse, if $M$ projective, null-homotopic and bounded below, 
	then it has a contracting chain homotopy,
	which is constructed inductively.
	A similar proof lead to the following result.


	\begin{lemma}
		\label{lemma-relative-homotopic-zero-morphism}
		Let $f:L\to N$ be a homogeneous morphism of 
		$\dg$ modules
		of degree $k\in \mathbb{Z}$.
		Let $L'$ be a 
		$\dg$ submodule
		of $L$ and  $P$ graded submodule of $L$ such that 
		$P$ is projective and bounded below, 
		and in each degree we have the decomposition $L_i=L_i'\oplus P_i$.
		Write $f'$ for the restriction of $f$ to $L'$. 
		Suppose that the homology of $N$ is zero 
		and that $f'$ is null-homotopic 
		by a homotopy $h:L'\to N$. Then there exists a homotopy 
		$H$ on $L$ extending $h$, which makes $f$ null-homotopic.
	\end{lemma}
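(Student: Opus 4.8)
The plan is to build the extended homotopy $H$ one degree at a time, imitating the standard inductive construction of a contracting homotopy on a projective, bounded below, null-homotopic complex alluded to just before the statement. Since $P$ is bounded below, fix $n_0$ with $P_i=0$ for $i<n_0$; in those degrees $L_i=L_i'$, so there is nothing to extend and we set $H=h$ there. Now assume inductively that $H$ has been defined on $L_j$ for all $j<i$, that it agrees with $h$ on $L_j'$, and that the homotopy identity $\partial H-(-1)^kH\partial=f$ holds on $L_j$ for $j<i$ (this makes sense because $\partial$ lowers degree). Using the splitting $L_i=L_i'\oplus P_i$, the value of $H$ on the $L_i'$-summand is forced to equal $h$; the only thing left to produce is a morphism $\psi\colon P_i\to N_{i+k+1}$ that will serve as $H|_{P_i}$.

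To find $\psi$, consider $\varphi\colon P_i\to N_{i+k}$ given by $\varphi(x)=f(x)+(-1)^kH(\partial x)$, which is well defined because $\partial x\in L_{i-1}$, where $H$ is already known. A direct computation—using that $f$ is a morphism of $\dg$ modules (hence commutes with the differentials up to the Koszul sign) and that $\partial H(\partial x)=f(\partial x)$ by the inductive identity applied to $\partial x\in L_{i-1}$ together with $\partial^2=0$—shows that $\varphi$ takes values in the cycles $Z_{i+k}(N)$. Since the homology of $N$ vanishes, every cycle is a boundary, so $\partial_N\colon N_{i+k+1}\to Z_{i+k}(N)$ is surjective; as $P_i$ is projective, $\varphi$ lifts along this surjection to a morphism $\psi\colon P_i\to N_{i+k+1}$ with $\partial_N\psi=\varphi$. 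Define $H$ on $L_i$ to be $h$ on the $L_i'$-summand and $\psi$ on the $P_i$-summand. One then checks the homotopy identity on $L_i$: on the $L_i'$-part it reduces to the identity satisfied by $h$ (using that $L'$ is a subcomplex and $f'=f|_{L'}$), while on the $P_i$-part it is exactly $\partial_N\psi(x)-(-1)^kH(\partial x)=\varphi(x)-(-1)^kH(\partial x)=f(x)$. This completes the induction, and the union of the $H|_{L_i}$ is the sought homotopy: it extends $h$ by construction, and the verified identity says precisely that $f$ is null-homotopic.

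The step requiring the most care is the cycle condition $\varphi(P_i)\subseteq Z_{i+k}(N)$, since this is exactly what lets the acyclicity of $N$ enter; it rests on having carried the full homotopy identity—not merely the values of $H$—along in the inductive hypothesis, and on $f$ being a genuine morphism of $\dg$ modules rather than an arbitrary graded map. A secondary point to bear in mind is that $P$ is assumed only to be a graded submodule of $L$, so for $x\in P_i$ the element $\partial x$ may have a nonzero component in $P_{i-1}$ as well as in $L_{i-1}'$; this causes no difficulty because $H$ has already been fully defined on $L_{i-1}=L_{i-1}'\oplus P_{i-1}$ before degree $i$ is treated. Signs throughout follow the Koszul conventions fixed in Section~\ref{preliminaries}.
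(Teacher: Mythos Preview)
Your proposal is correct and is precisely the argument the paper has in mind: the paper omits the proof entirely (just \qed), having remarked immediately before the statement that ``a similar proof'' to the standard inductive construction of a contracting homotopy on a projective, bounded below, acyclic complex yields the result. Your degree-by-degree lifting using projectivity of $P_i$ and acyclicity of $N$ is exactly that standard argument, carried out relative to the already-given homotopy $h$ on $L'$.
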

\qed

	\begin{theorem}[Relative Lifting Theorem]
		\label{th-relative-relevement}
		Let $f:M\to N$ and $\varphi:L\to N$ morphisms of $\dg$ modules 
		of degree $l$ and $k$, respectively.
		Suppose that $f$ is a quasi-isomorphism, and 
		let $L'$ be a $\dg$ submodule of $L$ and  
		$P$ graded submodule of $L$ such that 
		$P$ is projective and bounded below, 
		and in each degree we have the decomposition $L_i=L_i'\oplus P_i$.
		Write $\varphi'$ for the restriction of $\varphi$ to $L'$. 
		Suppose there is 
		a morphism $\alpha':L'\to M$ of $\dg$ modules of degree $k-l$ 
		that lifts $\varphi'$ up to homotopy along $f$.
		Then there exists an extension $\alpha:L\to M$ of $\alpha'$, 
		that lifts $\varphi$ up to homotopy along $f$.
		Moreover, the homotopy can be choose to be an extension of the homotopy associated to $\alpha'$.
			\begin{equation}
				\begin{gathered}
				\xymatrix{
				M\ar[rr]^-f & & N \\
				 & L  \ar@{-->}[ul]_-{\alpha}  \ar[ur]^-{\varphi}& \\
				 & L' \ar@/^1pc/[uul]^-{\alpha'} \ar@{^{(}->}[u] \ar@/_1pc/[uur]_-{\varphi'}& 
				}
				\end{gathered}
			\end{equation}
	\end{theorem}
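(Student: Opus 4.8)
The plan is to reduce the statement to Lemma~\ref{lemma-relative-homotopic-zero-morphism} by passing to the mapping cone of $f$. Since $f$ is a quasi-isomorphism, its mapping cone has vanishing homology, and the problem of lifting a morphism up to homotopy along $f$ becomes the problem of making a single morphism \emph{into that cone} null-homotopic; the relative data $(L',P)$ and the degreewise decomposition $L_i=L'_i\oplus P_i$ are then exactly the hypotheses that Lemma~\ref{lemma-relative-homotopic-zero-morphism} requires.

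First I would form the mapping cone $E:=\mathrm{Cone}(f)$ of $f\colon M\to N$, suitably shifted so that the degree-$l$ morphism $f$ becomes a structural component and the Koszul signs give $\partial_E^{2}=0$: as a graded $\Lambda$-module $E$ is a shift of $M$ in direct sum with $N$, with differential assembled from $\partial_M$, $\partial_N$ and $f$, and there is a short exact sequence of $\dg$ modules relating $N$, $E$ and a shift of $M$. The associated long exact homology sequence has connecting homomorphism given, up to sign, by $H_\ast(f)$; as $f$ is a quasi-isomorphism this forces $H_\ast(E)=0$. Write $\iota\colon N\hookrightarrow E$ for the canonical inclusion, which is a chain map.

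Next I would set up the dictionary. To give a morphism $\alpha\colon L\to M$ of $\dg$ modules of degree $k-l$ together with a homotopy $h_0\colon L\to N$ of degree $k+1$ witnessing $f\circ\alpha\simeq\varphi$ is the same as to give a single morphism $H=(\alpha,h_0)\colon L\to E$ of degree $k+1$ with $\partial(H)=\iota\circ\varphi$: the $M$-component of this equation says exactly that $\alpha$ is a morphism of $\dg$ modules, while the $N$-component is exactly the homotopy relation $f\alpha-\varphi=\pm\,\partial(h_0)$. This dictionary is compatible with restriction to the $\dg$ submodule $L'$, and under it the hypothesis that $\alpha'$ is a morphism of $\dg$ modules lifting $\varphi'$ up to homotopy along $f$ becomes exactly the statement that the morphism $\iota\circ\varphi'\colon L'\to E$ is null-homotopic. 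Now $\iota\circ\varphi\colon L\to E$ is a morphism of $\dg$ modules of degree $k$ (because $\partial(\varphi)=0$), its restriction to $L'$ is null-homotopic, $H_\ast(E)=0$, $P$ is projective and bounded below, and $L_i=L'_i\oplus P_i$ in each degree; hence Lemma~\ref{lemma-relative-homotopic-zero-morphism} yields a homotopy $H\colon L\to E$ extending the one on $L'$ and making $\iota\circ\varphi$ null-homotopic. Reading $H=(\alpha,h_0)$ back through the dictionary gives the required extension $\alpha\colon L\to M$ of $\alpha'$ lifting $\varphi$ up to homotopy along $f$, with the homotopy extending the one associated to $\alpha'$.

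The only real work is bookkeeping: choosing the shift and the Koszul signs so that a degree-$l$ morphism $f$ and a degree-$k$ morphism $\varphi$ assemble consistently, verifying $\partial_E^{2}=0$ and that $\iota$ is a chain map, and checking that the degree shifts in the dictionary balance ($\alpha$ of degree $k-l$, $h_0$ and $H$ of degree $k+1$, $\iota\circ\varphi$ of degree $k$). A direct alternative — extending $\alpha'$ and its homotopy over $P$ by induction on degree, using projectivity and boundedness below of $P$ together with the quasi-isomorphism property of $f$ at each step — is available and mirrors the proof of Lemma~\ref{lemma-relative-homotopic-zero-morphism}, but it amounts to re-deriving the acyclicity of the cone en route, so the reduction above is the efficient route.
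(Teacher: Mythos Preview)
Your proposal is correct and follows essentially the same route as the paper: form the mapping cone $C(f)$, observe it is acyclic because $f$ is a quasi-isomorphism, translate the pair $(\alpha',h')$ into a null-homotopy of $u\circ\varphi'\colon L'\to C(f)$, apply Lemma~\ref{lemma-relative-homotopic-zero-morphism} to extend it over $L$, and then read off the two components as the lift $\alpha$ and its homotopy. The paper carries out the sign bookkeeping you flag by writing the cone differential and the resulting equations in matrix form, but the argument is the same.
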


	\begin{proof}
		Let $C(f)$ be the mapping cone of $f$. Let $u:N\to C(f)$ 
		the inclusion $x\mapsto \begin{pmatrix}0\\x\end{pmatrix}$
		and $h':L'\to N$ the homotopy from $\varphi'$ to $f\circ \alpha'$. Then we can easily check
		that $\begin{pmatrix}\alpha'\\ h' \end{pmatrix}:L'\to C(f)$ 
		is a homotopy to $0$ of $u\circ \varphi'=\begin{pmatrix}
		0\\ \varphi' \end{pmatrix}$. 
		Lemma \ref{lemma-relative-homotopic-zero-morphism} says there exists a homotopy to zero
		$\begin{pmatrix}H_1\\ H_2\end{pmatrix}:L\to C(f)$ of $u\circ \varphi$ extending  
		$\begin{pmatrix}\alpha'\\ h' \end{pmatrix}$. So we have,

		\begin{equation}
		\begin{gathered}
		\begin{array}{lcl}
		\begin{pmatrix}0\\ \varphi \end{pmatrix} &=& \partial_{C(f)}\begin{pmatrix}H_1\\ 
		H_2\end{pmatrix}
		+(-1)^{k}\begin{pmatrix}H_1\\ H_2\end{pmatrix}\partial_L\\
		&=&\begin{pmatrix}-(-1)^{l}\partial_M H_1  +(-1)^kH_1 \partial_L\\ 
		fH_1+\partial_N H_2 +(-1)^kH_2 \partial_L\end{pmatrix}\\
		\end{array}
		\end{gathered}
		\end{equation}

		This gives the following equations.

		\begin{equation}
		\begin{gathered}
		\begin{array}{lcl}
		\partial_M H_1 &=& (-1)^{l+k}H_1\partial_L\\
		\varphi-fH_1 &=& \partial_N H_2+(-1)^kH_2 \partial_L
		\end{array}
		\end{gathered}
		\end{equation}

		The first says that $H_1$ is a morphism of $\dg$-modules and the second that $H_2$ is a 
		homotopy from $f H_1$ to $\varphi$. Finally, we take $\alpha=H_1$ as the lift of $\varphi$ along $f$.
	\end{proof}

	Let $\field$ be a field.
	For $n$ positive integer, $\Sigma_n$ denote the symmetric group of $n$ elements
	and $\field[\Sigma_n]$ its group ring over $\field$ (cf. \cite{masson2008introduction}, \cite{weibel1995introduction}).


	\begin{proposition}[Acyclic extension]
		\label{prop-acyclic-complete}
		Let $M$ be a $\field[\Sigma_n]$-free finitely generated $\dga$ module over $\field$. Then
		there exists a $\field[\Sigma_n]$-free finitely generated acyclic $\dga$ module $N$
		over $\field$, 
		with $M_0=N_0$ and $M$ as $\dga$ submodule.
	\end{proposition}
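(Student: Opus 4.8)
The plan is to build $N$ out of $M$ by successively adjoining finitely generated free $\field[\Sigma_n]$-cells in strictly positive degrees, chosen so as to kill homology, while never touching degree $0$ (so that $N_0=M_0$) and never destroying freeness or finite generation. Recall $\field[\Sigma_n]$ is a finite-dimensional $\field$-algebra, hence Noetherian, so submodules of finitely generated $\field[\Sigma_n]$-modules are again finitely generated; this is what keeps everything finite. Also, since $M$ is concentrated in non-negative degrees and $\epsilon$ is a chain map into $\field$ (placed in degree $0$), we have $Z_0(M)=M_0$ and $\operatorname{im}(\partial^M_1)\subseteq\ker\epsilon_0=:\widetilde M_0$, and $M_0/\widetilde M_0\cong\field$ via $\epsilon$.

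\emph{Base step.} The quotient $\widetilde M_0/\operatorname{im}(\partial^M_1)$ is a finitely generated $\field[\Sigma_n]$-module; choose a finitely generated free $\field[\Sigma_n]$-module $F_1$ with an equivariant surjection $F_1\twoheadrightarrow\widetilde M_0/\operatorname{im}(\partial^M_1)$, and lift it (projectivity of $F_1$) to an equivariant map $\psi_1\colon F_1\to\widetilde M_0\subseteq M_0$. Set $N^{(1)}=M\oplus F_1$ with $F_1$ placed in degree $1$ and $\partial|_{F_1}=\psi_1$, so $\partial^2=0$; extend $\epsilon$ by $0$ on $F_1$ and keep $\eta$. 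Then $N^{(1)}$ is a $\dga$ module, free and finitely generated in each degree over $\field[\Sigma_n]$, with $N^{(1)}_0=M_0$, containing $M$ as a $\dga$ submodule, and with $\operatorname{im}(\partial^{N^{(1)}}_1)=\widetilde M_0$, so $\epsilon$ induces $H_0(N^{(1)})\cong\field$. \emph{Inductive step.} Given $N^{(j)}$ ($j\ge1$) free and finitely generated in each degree, with $N^{(j)}_0=M_0$, containing $M$, and such that $\epsilon$ is a homology isomorphism in degrees $\le j-1$, the cycles $Z_j(N^{(j)})$ are finitely generated (submodule of the finitely generated $N^{(j)}_j$), hence so is $H_j(N^{(j)})$; pick a finitely generated free $F_{j+1}$ with an equivariant surjection onto $H_j(N^{(j)})$ and lift it along $Z_j\twoheadrightarrow H_j(N^{(j)})$ to an equivariant $\partial|_{F_{j+1}}\colon F_{j+1}\to Z_j(N^{(j)})$. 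Put $N^{(j+1)}=N^{(j)}\oplus F_{j+1}$ with $F_{j+1}$ in degree $j+1$; then $\partial^2=0$, $N^{(j+1)}$ agrees with $N^{(j)}$ in every degree $\neq j+1$, and $\operatorname{im}(\partial^{N^{(j+1)}}_{j+1})=Z_j(N^{(j)})$, so $H_j(N^{(j+1)})=0$ while $H_i$ for $i<j$ is unaffected. \emph{Colimit.} Set $N=\varinjlim_j N^{(j)}=\bigcup_j N^{(j)}$. Since the passage $N^{(j)}\rightsquigarrow N^{(j+1)}$ changes only degree $j+1$, the tower stabilizes in each fixed degree, so $N$ is a $\dga$ module, free and finitely generated in each degree, with $N_0=M_0$ and $M$ as a $\dga$ submodule; moreover $H_i(N)=H_i(N^{(i+1)})$ is $\field$ for $i=0$ and $0$ for $i\ge1$, i.e. $\epsilon\colon N\to\field$ is a quasi-isomorphism, so $N$ is acyclic.

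The one genuinely delicate point — the "main obstacle" — is the degree-$0$ constraint: because $N_0$ is forced to equal $M_0$, the only room to correct $H_0$ down to $\field$ is to attach $1$-cells whose boundaries lie in $\ker\epsilon_0$, and this succeeds precisely because $\operatorname{im}\partial^M_1$ already lies in $\ker\epsilon_0$ (a consequence of $\epsilon$ being a chain map into the complex $\field$ with zero differential) and $\ker\epsilon_0/\operatorname{im}\partial^M_1$ is finitely generated. Everything else is routine: freeness survives because we only ever adjoin free modules and take nested unions, equivariance of the attaching maps is automatic from projectivity of the $F_j$ over $\field[\Sigma_n]$, and degreewise finite generation is guaranteed by Noetherianity of $\field[\Sigma_n]$. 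One should note that $N$ will in general be unbounded above even if $M$ is bounded; this is unavoidable, since a globally finitely generated such $N$ would be a finite free $\field[\Sigma_n]$-resolution of $\field$, which exists only when $\field[\Sigma_n]$ is semisimple — so "finitely generated" in the statement is to be read in each degree.
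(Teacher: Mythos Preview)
Your proof is correct and follows essentially the same strategy as the paper's: iteratively adjoin free $\field[\Sigma_n]$-modules in positive degrees so as to kill the reduced homology, working inside the kernel of the augmentation to keep $N_0=M_0$ and $H_0\cong\field$, and then pass to the colimit. The paper packages the attachments as mapping cones of maps $PZ(-)\to(-)$ from free modules on cycles rather than your degree-by-degree cell attachment, and is less explicit than you are about finite generation --- your appeal to Noetherianity of $\field[\Sigma_n]$ is exactly the point that makes the ``finitely generated in each degree'' claim go through.
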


	\begin{proof}

		For the modules on $\field[\Sigma_n]$, we consider the adjunction 
		$L\vdash U:\set \to \text{Mod}_{\field[\Sigma_n]}$,
		where $U$ is the forgetful functor.
		For every module $N$, the counit gives the surjection $\epsilon_N:LU(N)\to N$, which will be
		denoted $p:PN\to N$. Given a $\dga$ module $M$ we denote $ZM$ its submodule of cycles. On
		$ZM$ the differential is $0$, 
		then we extend the meaning of $P$ to graded modules, we keep the same notation
		for the extended morphism $p:PZM\to ZM$. Consider the composition $d=i\circ p:PZM\to M$, where
		$i$ is the canonical inclusion of $ZM$ in $M$. $ZM$ can seen as a submodule of $PZM$,
		then $p:PZM\to ZM$ is a retraction for this inclusion and if $m\in ZM$, then $d(m)=m$.

		Observe that in the mapping cone of $d:i\circ p:PZM\to M$, $C(d)$, 
		all the cycles of $M$ are now boundaries
		and also on $C(d)$ will appear new cycles. Indeed, let $m\in M$ cycle, 
		recall that the differential of $C(d)$
		is given by $\begin{pmatrix} 0 & 0 \\ d & \partial_M \end{pmatrix}$.
		Then in $C(d)$ we have $\partial \begin{pmatrix}m\\ 0 \end{pmatrix}= \begin{pmatrix} 0\\ m\end{pmatrix}$,
		which means that $m$ is a boundary.  If it happens that $m$ is already a boundary in $M$, that is
		there exists $n$ such that $\partial_M(n)=m$, then $\begin{pmatrix} m \\ -n\end{pmatrix}$
		is a cycle in $C(d)$. From this also notice that if all the cycles of $M$ have degree at least $k$,
		then all the cycles in $C(d)$ will have at least degree $k+1$.

		Let $M$ $\field[\Sigma_n]$-free finitely generated $\dga$ module, and denote $W$ the kernel
		of the augmentation $\epsilon:M\to \field$ and consider the $\field[\Sigma_n]$-linear
		morphisms for $n\geq 1$,

		\begin{equation}
		\begin{gathered}
		\xymatrix{
		PZC(d_n) \ar[r]^-{d_{n+1}} & C(d_n)
		}
		\end{gathered}
		\end{equation}

		where $d_1$ is $d:PZW\to W$, and $d_{n+1}$ is $d:PZC(d_n)\to C(d_n)$. Then we have
		that $W$ is included in $C(d_1)$ and  $C(d_{n})$ is included in $C(d_{n+1})$.
		With this we can define a $\dga$-module $N$ that satisfies the conditions of the theorem
		by taking the colimit of the following diagram,

		\begin{equation}
		\begin{gathered}
		\xymatrix{
		M & W \ar[l] \ar[r]& C(d_1) \ar[r]& C(d_2) \ar[r]& \cdots
		}
		\end{gathered}
		\end{equation}

		where all the arrows are the respective canonical inclusions. Observe that we can
		reduce the size of this acyclic extension by considering in the first step
		only the cycles of degree $0$ of $W$, and for the construction of $d_{n+1}$,
		considering only the cycles of degree $n$ of the last mapping cone.
	\end{proof}


\section{Operads}
\label{section-operads}

	In the following we consider differential graded modules over a field $\field$.
	An operad $P$ in the monoidal category $\dgacat$ 
	is a collection of $\dga$-modules $\{P(n)\}_{n\geq 1}$ together with 
	right actions of the symmetric group $\Sigma_n$ on each component $P(n)$, 
	and morphisms of the form $\gamma:P(r)\otimes P(i_1)\otimes P(i_r)\to P(i_1+\cdots+i_r)$,
	which satisfy the usual conditions of existence of an unit, associativity and equivariance.
	Morphisms $\gamma$ will be called composition morphisms of the operad.
	A morphism between operads $f:P\to Q$, is 
	a collection of $\dga$-morphisms $f_n:P(n)\to Q(n)$ of degree $0$,
	respecting units, composition and equivariance. 
	The category of operads is denoted $\mathcal{OP}$ (cf.  \cite{may2007}, \cite{markl2007operads}, \cite{loday2012algebraic}).

	Two fundamental examples of operads are the endomorphism operad
	and the coendomorphism operad, because their behavior inspired the definition of operad.
	For $M\in\dgacat$, the operad $\text{End}(M)$ of endomorphisms of $M$ is defined by
	taking $\text{End}(M)(n)=Hom(M^{\otimes n},M)$, 
	with unit $\eta:\field \to \text{End}(M)(1)$ given by $\eta(1)=\text{Id}_{M}$,
	right action of $\Sigma_n$ over $\text{End}(M)$ induced by the left
	action of $\Sigma_n$ over $M^{\otimes n}$,
	and obvious composition applications.
	For the coendomorphism operad $\text{Coend}(M)$ we take
	$\text{Coend}(N)(n)=Hom(N,N^{\otimes n})$ with unit 
	$\eta(1)=\text{Id}_{N}$,
	right action of $\Sigma_n$ over $\text{Coend}(N)$
	is the induced by the right action of
	$\Sigma_n$ over $N^{\otimes n}$ and obvious compositions.

	An important feature in the theory of operads are its representations.
	That is, when the abstract operations of the operads are
	interpreted as concrete application over an object in the ground
	category.
	This passage from the abstract to the concrete is made
	through morphisms of type $P(n)\to \text{Hom}(A^{\otimes n},A)$. In this sens, an
	element of $\mathcal{P}$ is 
	realized as an $n$-ary operation over $A$. 
	This association must be coherent with composition
	operations and  symmetric groups actions.

	An algebra over the operad $\mathcal{P}$, or $\mathcal{P}$-algebra, 
	is a $\dga$ module $A$, together with a morphism of operads from $\mathcal{P}$ to $\text{End}(A)$.
	Similarly, a coalgebra is a $\dga$ module $C$,
	together with a morphism of operad from $\mathcal{P}$ to $\text{Coend}(C)$.

	In the symmetric monoidal category $\dgacat$, 
	for every $Y$ $\dga$ module 
	the functor $-\otimes Y$ is left adjunct of the functor $\text{Hom}(Y,-)$.
	Denote $\theta$ the natural bijection
	$\theta_{X,Z}: \text{Hom}(X\otimes Y, Z)\to \text{Hom}(X, \text{Hom}(Y,Z))$  given by this adjunction.
	Then, for a morphism of operads $f:\mathcal{P}\to \text{End}(A)$,  
	each component $f_n:P(n)\to \text{Hom}(A^{\otimes n},A)$ determines a morphism of $\dga$ modules
	$\varphi_n:P(n)\otimes A\to A^{\otimes n}$, given by $\varphi_n=\theta^{-1}(f_n)$.
	This allows defining $P$-algebras and $P$-coalgebras
	equivalently by a collection $\{\varphi_n\}_{n\geq 1}$ 
	of morphisms of $\dga$ modules
	$\varphi_n: P(n)\otimes C^{\otimes n} \to C$,
	and
	$\varphi_n: P(n)\otimes C \to C^{\otimes n}$,
	respectively,
	which satisfying the usual conditions of associativity, unit and equivariance.

	If we forget composition morphisms of an operad $P$, 
	the collection of $\dga$ modules with right actions that remains
	is called an $\mathbb{S}$-module.
	They form a category denoted $\smodcat$,
	which has all colimits and limits because
	it is a category of diagrams over $\dgacat$.

	The forgetful functor $U:\mathcal{OP}\to \smodcat$
	has a right adjoint denoted $F:\smodcat\to \mathcal{OP}$, 
	called the free operad functor
	(cf. \cite{rezkthesis1996},\cite{markl2007operads}). 
	This adjunction 
	defines for every operad $\mathcal{Q}$ and $\mathbb{S}$-module $M$, the natural bijection,

	\begin{equation}
	\begin{gathered}
	\theta: \mathcal{OP}(F(M),\mathcal{Q})\to \mathbb{S}\text{-Mod}(M,U(\mathcal{Q}))
	\end{gathered}
	\end{equation}

	With unit and counit denoted by $\eta$ and $\epsilon$,
	respectively. 
	Recall that $\eta_M:M\to UF(M)$ and $\epsilon_\mathcal{P}:FU(\mathcal{P})\to \mathcal{P}$.

	The category of operads has all small colimits 
	(cf. \cite{fresse-homoperad-2016}), which will be used 
	to construct the $E_\infty$-operad $\mathcal{K}$
	in section \ref{section-principal-results}.

\subsection{$E_\infty$-Operads}
\label{subsection-e-infty-operads}

\begin{definition}[$E_\infty$-Operad] 
\label{df-Einf-Operad}
	An operad $\mathcal{P}$ on the category $\dgacat$ is called $E_\infty$-operad 
	if each component $P(n)$ is a $\field[\Sigma_n]$-free resolution of $\field$.
\end{definition}

\begin{definition}[$E_\infty$-algebra and $E_\infty$-coalgebra] 
\label{df-Einf-Algebra-Coalgebra}
	We call $E_\infty$-algebra any $\mathcal{P}$-algebra with $\mathcal{P}$ an
	$E_\infty$-operad. And in the same way, an $E_\infty$-coalgebra
	is an $\mathcal{P}$-coalgebra where the operad $\mathcal{P}$ is an $E_\infty$-operad.
\end{definition}

We introduce a notion of morphism between $E_\infty$-coalgebras which is well suited for our purpose.

\begin{definition}
	Let $\mathcal{P}$ be an $E_\infty$-operad on the category $\dgacat$, and let $A,B$
	$\mathcal{P}$-coalgebras. A morphism $f:A\to B$ of $\mathcal{P}$-coalgebras
	is a morphism of $\dgacat$ which preserves the $\mathcal{P}$-coalgebra
	structure up to homotopy, that is, the following diagram 

		\begin{equation}
		\begin{gathered}
		\xymatrix{
		\mathcal{P}(n)\otimes A\ar[r]^-{\varphi_n^A} \ar[d]_-{1\otimes f}& A^{\otimes n} \ar[d]^-{f^{\otimes n}}\\
		\mathcal{P}(n)\otimes B \ar[r]_-{\varphi_n^B} & B^{\otimes n}
		}
		\end{gathered}
		\end{equation}

	is commutative up to homotopy for every $n>0$, where $\varphi^A_n$ and $\varphi^B_n$
	are the associated morphisms of the $\mathcal{P}$-coalgebra structure of $A$ and $B$,
	respectively. The category of coalgebras on the operad $\mathcal{P}$ is denoted
	$\mathcal{P}$-\text{CoAlg}.
\end{definition}

\subsection{Polynomial Operads}\label{sec-polynomial-operads}
\label{subsection-polynomial-operads}

The polynomial operads construction is a technique used to create an operad from 
an $\mathbb{S}$-module with an $\mathbb{S}$-submodule having an operadic structure, 
in such a way that this operadic structure is preserved. 
Recall that we denote by $U$ the forgetful functor from operads to $\mathbb{S}$-modules.

\begin{definition}
	\label{df-category-C-operad-S-mod}
	$\mathfrak{C}$ is the category such that,
		\begin{enumerate}

		\item The objects are pairs of the form $(\mathcal{E},M)$, where $M$ is a $\mathbb{S}$-module and
		$\mathcal{E}$ is an operad such that $U(\mathcal{E})$ is a $\mathbb{S}$-submodule
		of $M$. The canonical inclusion is denoted by $i_\mathcal{E}:U(\mathcal{E})\to M$.

		\item A morphism from $(\mathcal{E},M)$ to $(\mathcal{F},N)$, is 
		a pair $(f,\overline{f})$ with $f:\mathcal{E}\to \mathcal{F}$ morphism of operads,
		and $\overline{f}:M\to N$ morphism of $\mathbb{S}$-modules, 
		such that the following diagram commutes.

		\begin{equation}\label{dgm-morphism-cat-C}
		\begin{gathered}
		\xymatrix{
		M \ar[r]^-{\overline{f}} & N\\
		U(\mathcal{E}) \ar[u]^-{i_{\mathcal{E}}} \ar[r]_-{U(f)}& U(\mathcal{F}) \ar[u]_-{i_{\mathcal{F}}}
		}
		\end{gathered}
		\end{equation}

		\end{enumerate}
\end{definition}

	Essentially, a morphism from $(\mathcal{E},M)$ to $(\mathcal{F},N)$ in $\mathfrak{C}$ is morphism
	of $\mathbb{S}$-modules
	from $M$ to $N$ that sends $U(\mathcal{E})$
	to $U(\mathcal{F})$ and respects the operadic structure of $\mathcal{E}$.

\begin{definition}
\label{df-forgetful-OP-to-C}
	We define $\mathfrak{U}:\mathcal{OP}\to \mathfrak{C}$ to be the functor forgetful
	which sends every operad $\mathcal{E}$ to the pair $(\mathcal{E},U(\mathcal{E}))$.
	That is, every operad is sent to the pair formed by itself and its underlying $\mathbb{S}$-module.
\end{definition}

\begin{theorem}
\label{th-adj-poly-operads}
	The functor $\mathfrak{U}:\mathcal{OP}\to \mathfrak{C}$ has a left adjoint.
	We denote this adjoint by $\mathfrak{J}$, 
	and the image of $(\mathcal{E},M)$ under $\mathfrak{J}$ by $\mathcal{E}[M]$
	and called the polynomial operad on $M$ with coefficients in $\mathcal{E}$.
\end{theorem}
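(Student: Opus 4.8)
The plan is to build the left adjoint $\mathfrak{J}$ explicitly and then verify the universal property, rather than invoking an abstract adjoint functor theorem (although the category $\mathcal{OP}$ being cocomplete, as recalled in the excerpt, makes such a route available as a fallback). First I would describe $\mathcal{E}[M]$ as a pushout in $\mathcal{OP}$. Given an object $(\mathcal{E},M)$ of $\mathfrak{C}$, the inclusion $i_{\mathcal{E}}\colon U(\mathcal{E})\hookrightarrow M$ of $\mathbb{S}$-modules is transposed under the free–forgetful adjunction $F\vdash U$ to a morphism of operads $F(U(\mathcal{E}))\to F(M)$, and the counit gives $\epsilon_{\mathcal{E}}\colon F(U(\mathcal{E}))\to \mathcal{E}$. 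I would then define
\begin{equation}
\begin{gathered}
\xymatrix{
F(U(\mathcal{E})) \ar[r]^-{F(i_{\mathcal{E}})} \ar[d]_-{\epsilon_{\mathcal{E}}} & F(M) \ar[d] \\
\mathcal{E} \ar[r] & \mathcal{E}[M]
}
\end{gathered}
\end{equation}
to be a pushout square in $\mathcal{OP}$, which exists since $\mathcal{OP}$ has all small colimits. Intuitively this takes the free operad on $M$ and then quotients/amalgamates so that the part generated by $U(\mathcal{E})$ composes the way $\mathcal{E}$ says it should — exactly ``polynomials in $M$ with coefficients in $\mathcal{E}$.''

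Next I would construct the unit of the adjunction and check the universal property. The composite $M \xrightarrow{\eta_M} UF(M) \to U(\mathcal{E}[M])$ (using the top-right leg of the pushout) together with the map $\mathcal{E}\to\mathcal{E}[M]$ (the bottom leg) assembles, by the commutativity of the pushout square translated across $F\vdash U$, into a morphism $(\mathcal{E},M)\to \mathfrak{U}(\mathcal{E}[M])=(\mathcal{E}[M],U(\mathcal{E}[M]))$ in $\mathfrak{C}$; this will be the unit $\eta_{(\mathcal{E},M)}$. To verify universality, let $\mathcal{Q}$ be any operad and $(f,\overline f)\colon (\mathcal{E},M)\to\mathfrak{U}(\mathcal{Q})=(\mathcal{Q},U(\mathcal{Q}))$ a morphism in $\mathfrak{C}$. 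The $\mathbb{S}$-module map $\overline f\colon M\to U(\mathcal{Q})$ transposes to an operad map $F(M)\to\mathcal{Q}$, and $f\colon\mathcal{E}\to\mathcal{Q}$ is already an operad map; the compatibility square \eqref{dgm-morphism-cat-C} is precisely the statement that these two operad maps agree after precomposition with the two arrows out of $F(U(\mathcal{E}))$ in the pushout diagram (one checks this by transposing \eqref{dgm-morphism-cat-C} across $F\vdash U$ and using naturality of the counit $\epsilon$). Hence the universal property of the pushout produces a unique operad morphism $\mathcal{E}[M]\to\mathcal{Q}$ through which both legs factor, and unwinding the definitions shows this is the unique morphism in $\mathcal{OP}$ whose image under $\mathfrak{U}$, precomposed with the unit, recovers $(f,\overline f)$.

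Finally I would record naturality: both the pushout construction and the transposition across $F\vdash U$ are functorial in $(\mathcal{E},M)$, so $(\mathcal{E},M)\mapsto\mathcal{E}[M]$ extends to a functor $\mathfrak{J}\colon\mathfrak{C}\to\mathcal{OP}$ and the bijection
\begin{equation}
\begin{gathered}
\mathcal{OP}(\mathcal{E}[M],\mathcal{Q}) \;\cong\; \mathfrak{C}\bigl((\mathcal{E},M),\mathfrak{U}(\mathcal{Q})\bigr)
\end{gathered}
\end{equation}
is natural in both variables, giving the adjunction $\mathfrak{J}\vdash\mathfrak{U}$. The main obstacle I anticipate is not the existence of the pushout — that is handed to us by cocompleteness of $\mathcal{OP}$ — but rather the careful bookkeeping in the universal-property verification: one must translate the single commuting square \eqref{dgm-morphism-cat-C} in $\mathfrak{C}$ into the correct pair of coherence conditions over the pushout in $\mathcal{OP}$, and this hinges on using the $F\vdash U$ adjunction transposition together with naturality of its unit and counit in exactly the right places; getting the handedness of the adjunction and the direction of $i_{\mathcal{E}}$ straight is where a sign-free but genuinely delicate diagram chase is needed.
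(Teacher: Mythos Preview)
Your proposal is correct and follows essentially the same approach as the paper: both define $\mathcal{E}[M]$ as the pushout in $\mathcal{OP}$ of the span $\mathcal{E}\xleftarrow{\epsilon_{\mathcal{E}}}FU(\mathcal{E})\xrightarrow{F(i_{\mathcal{E}})}F(M)$, and both take the unit to be the pair consisting of the pushout leg $\alpha\colon\mathcal{E}\to\mathcal{E}[M]$ together with the transpose $\theta(\beta)\colon M\to U(\mathcal{E}[M])$ of the other leg. Your write-up actually spells out the universal-property verification that the paper dismisses as ``routine,'' so in that respect you are slightly more thorough.
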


\begin{proof}
	We can associate to every $(\mathcal{E},M)\in \mathfrak{C}$ the following diagram in $\mathcal{OP}$,

	\begin{equation}\label{diagram-cognomon-op}
	\begin{gathered}
	\xymatrix{
	FU(\mathcal{E}) \ar[r]^-{\epsilon_{\mathcal{E}}} \ar[d]_-{F(i_{\mathcal{E}})} & \mathcal{E} \\
	F(M) &\\
	}
	\end{gathered}
	\end{equation}

	where $\epsilon:FU\to 1_{\mathcal{OP}}$ is the counit of the adjunction 
	$F\vdash U:\mathbb{S}\to \mathcal{OP}$. 
	This association is functorial by
	the naturality of the counit $\epsilon$ and the definition
	of $(f,\overline{f})$ as a morphism in $\mathfrak{C}$.
	Thus we have a functor $Cm$ from $\mathfrak{C}$ to 
	the category of diagrams in $\mathcal{OP}$ of the form
	$\xymatrix{{\bullet}  & {\bullet} \ar[r] \ar[l]& {\bullet}}$
	Then, we define the functor $\mathfrak{J}:\mathfrak{C}\to \mathcal{OP}$
	to be the composition of $Cm$ with the functor of colimits
	on $\mathcal{OP}$.

	In order to prove that we have the adjunction 
	$\mathfrak{J}\vdash \mathfrak{U}:\mathfrak{C}\to \mathcal{OP}$,
	we only have to
	construct for every object $(\mathcal{E},M)\in \mathfrak{C}$
	an universal arrow $\Psi$ from $(\mathcal{E},M)$ to  
	$\mathfrak{U}\mathfrak{J}(\mathcal{E},M)=(\mathcal{E}[M],U(\mathcal{E}[M]))$.

	Let $(\mathcal{E},M)$ be an object in $\mathfrak{C}$ and consider the following
	diagram given by the colimit $\mathfrak{J}(\mathcal{E},M)$.

	\begin{equation}\label{diagram-EM-colimit}
	\begin{gathered}
	\xymatrix{
	 FU(\mathcal{E}) \ar[r]^-{\epsilon_{\mathcal{E}}} \ar[d]_-{F(i_{\mathcal{E}})} & \mathcal{E} \\
	F(M) &  \mathcal{E}[M] \ar@{<-}[u]_-{\alpha} \ar@{<-}[l]^-{\beta} \\
	}
	\end{gathered}
	\end{equation}

	Now consider the couple of arrows $(\alpha,\theta(\beta))$, where
	$\theta$ is the isomorphism, 

	\begin{equation}
	\begin{gathered}\label{diagram-iso-F-U-adjuntion}
	\xymatrix{
	\mathcal{OP}(F(M),P) \ar[r]^-{\theta} & \mathbb{S}(M,U(P))
	}
	\end{gathered}
	\end{equation}

	given by the adjunction $F\vdash U$. 
	This couple will be our universal arrow $\Psi$. 
	Checking that $\Psi$
	is a morphism in $\mathfrak{C}$ and that it satisfies the universal property,
	is routine.
\end{proof}

	The universal arrow 
	in the proof of theorem \ref{th-adj-poly-operads},
	extends to the unit of the adjunction
	$\mathfrak{J}\dashv \mathfrak{U}:\mathcal{C}\to \mathcal{OP}$.
	We keep the notation $\Psi$ for this unit. 
	The universal property for the unit $\Psi:1_{\mathfrak{C}}\to \mathfrak{U}\mathfrak{J}$,
	gives the following result.

	\begin{proposition}
	\label{prop-property-unit-adjunction-poly-op}
	Let $(\mathcal{E},M)\in \mathfrak{C}$ and $\mathcal{A}\in \mathcal{OP}$.
	For every morphism 
	$(f,\overline{f}):(\mathcal{E},M)\to \mathfrak{U}(\mathcal{A})=(\mathcal{A},U(\mathcal{A}))$,
	there exists an unique morphism of operads $\varphi:\mathcal{E}[M]\to \mathcal{A}$,
	such that $U(\varphi)\Psi = \overline{f}$. So we have the following commutative diagram.

	\begin{equation}
	\begin{gathered}
	\xymatrix@C=4pc{
	(\mathcal{E},M) \ar[r]^-\Psi \ar[rd]_-{(f,\overline{f})} & (\mathcal{E}[M],U(\mathcal{E}[M])) \ar[d]^-{(\varphi,U(\varphi))}\\
	& (\mathcal{A},U(\mathcal{A}))
	}
	\end{gathered}
	\end{equation}

	\end{proposition}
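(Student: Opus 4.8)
The plan is to deduce Proposition~\ref{prop-property-unit-adjunction-poly-op} directly from the adjunction $\mathfrak{J}\dashv\mathfrak{U}$ established in Theorem~\ref{th-adj-poly-operads}, together with the general fact that the unit of an adjunction is a universal arrow. First I would recall the abstract setup: for an adjunction $\mathfrak{J}\dashv\mathfrak{U}\colon\mathfrak{C}\to\mathcal{OP}$ with unit $\Psi\colon 1_{\mathfrak{C}}\to\mathfrak{U}\mathfrak{J}$, the component $\Psi_{(\mathcal{E},M)}\colon(\mathcal{E},M)\to\mathfrak{U}\mathfrak{J}(\mathcal{E},M)=(\mathcal{E}[M],U(\mathcal{E}[M]))$ is universal from $(\mathcal{E},M)$ to the functor $\mathfrak{U}$. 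By definition of universality, for any operad $\mathcal{A}$ and any morphism $(f,\overline{f})\colon(\mathcal{E},M)\to\mathfrak{U}(\mathcal{A})=(\mathcal{A},U(\mathcal{A}))$ in $\mathfrak{C}$, there is a unique morphism of operads $\varphi\colon\mathcal{E}[M]\to\mathcal{A}$ such that $\mathfrak{U}(\varphi)\circ\Psi_{(\mathcal{E},M)}=(f,\overline{f})$. This is precisely the existence-and-uniqueness statement of the proposition, the commutative triangle being the one displayed.

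The one point that needs unwinding is the translation between the morphism $\mathfrak{U}(\varphi)\circ\Psi$ in $\mathfrak{C}$ and the stated equation $U(\varphi)\Psi=\overline{f}$ in $\mathbb{S}$-modules. Here I would note that $\mathfrak{U}(\varphi)=(\varphi,U(\varphi))$ by Definition~\ref{df-forgetful-OP-to-C}, so that composing in $\mathfrak{C}$ amounts, on the $\mathbb{S}$-module component, to composing $U(\varphi)$ with the $\mathbb{S}$-module component of $\Psi$, which (as remarked after the proof of Theorem~\ref{th-adj-poly-operads}, and from the construction there where $\Psi=(\alpha,\theta(\beta))$) we also denote $\Psi$ by abuse of notation. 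On the operad component the first coordinate of $(f,\overline{f})$ is forced, since a morphism in $\mathfrak{C}$ into $\mathfrak{U}(\mathcal{A})$ has its operad component determined by the requirement that diagram~\eqref{dgm-morphism-cat-C} commute together with $\varphi$ being the unique operad map coming from the colimit property; concretely the operad component of $(f,\overline{f})$ is $\varphi\circ\alpha$ precomposed appropriately, but the essential content is just the equation $U(\varphi)\Psi=\overline{f}$, which is what we record.

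Concretely, the steps in order are: (1) invoke Theorem~\ref{th-adj-poly-operads} to get the adjunction and its unit $\Psi$; (2) cite the standard category-theoretic fact (e.g.\ from \cite{MacLane-1998}) that each component of the unit of an adjunction is a universal arrow from the object to the right adjoint functor; (3) apply this universal property to the given morphism $(f,\overline{f})\colon(\mathcal{E},M)\to\mathfrak{U}(\mathcal{A})$, obtaining a unique operad morphism $\varphi\colon\mathcal{E}[M]\to\mathcal{A}$ with $\mathfrak{U}(\varphi)\circ\Psi=(f,\overline{f})$; (4) read off the $\mathbb{S}$-module component of this equation to get $U(\varphi)\circ\Psi=\overline{f}$, and draw the commutative triangle. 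I do not expect a genuine obstacle here: the only mild care required is bookkeeping of the two coordinates of morphisms in $\mathfrak{C}$ and the reuse of the symbol $\Psi$ for both the $\mathfrak{C}$-morphism and its $\mathbb{S}$-module component. Alternatively, if one prefers to avoid citing the abstract lemma, one can argue directly as in the proof of Theorem~\ref{th-adj-poly-operads}: a morphism of operads $\varphi\colon\mathcal{E}[M]\to\mathcal{A}$ is, by the colimit description~\eqref{diagram-EM-colimit} of $\mathcal{E}[M]$, the same datum as a pair of operad morphisms $F(M)\to\mathcal{A}$ and $\mathcal{E}\to\mathcal{A}$ agreeing on $FU(\mathcal{E})$; transposing the first along $F\dashv U$ via $\theta$ turns it into an $\mathbb{S}$-module map $M\to U(\mathcal{A})$, and the compatibility condition matches exactly the datum of $(f,\overline{f})$ being a morphism in $\mathfrak{C}$, giving the bijection and hence the proposition.
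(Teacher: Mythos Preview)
Your proposal is correct and follows exactly the approach of the paper: the paper states that the proposition is an immediate consequence of the universal property of the unit $\Psi:1_{\mathfrak{C}}\to\mathfrak{U}\mathfrak{J}$ of the adjunction from Theorem~\ref{th-adj-poly-operads}, and simply marks it \qed\ without further argument. You have in fact spelled out more than the paper does, including the bookkeeping of the two coordinates in $\mathfrak{C}$ and the alternative direct argument via the colimit description~\eqref{diagram-EM-colimit}.
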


\qed



		In the following we proof some properties of polynomial operads construction 
		which will be used in defining operad $\mathcal{K}$ in section 
		\ref{section-principal-results}.


		\begin{definition}
		\label{df-theta-n-cut-s-mod}
		Let $\theta_n:\mathbb{S}\text{-Mod}\to \mathbb{S}\text{-Mod}$ be the functor
		which sends each $\mathbb{S}$-module $M$ to the $\mathbb{S}$-module $\theta_n(M)$ such
		that $\theta_n(M)(k)=M(k)$ if $k\leq n$ and $\theta_n(M)(k)=0$ if $k>n$.
		The $\mathbb{S}$-modules in the image of the functor $\theta_n$ are
		called $n$-$\mathbb{S}$-modules.
		\end{definition}

		\begin{proposition}
		\label{prop-n-Mod-M-N}
		For every pair of $\mathbb{S}$-modules $M$, $N$ we have
		\begin{equation}\label{diag-n-Mod-M-N}
		\begin{gathered}
		\theta_n(M\circ N)=\theta_n(\theta_n(M)\circ \theta_n(N))
		\end{gathered}
		\end{equation}
		\end{proposition}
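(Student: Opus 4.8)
The plan is to unwind the definition of the composition product $M\circ N$ of $\mathbb{S}$-modules and track in which arities the summands contribute. Recall that
\[
(M\circ N)(k)=\bigoplus_{r\geq 1}\; M(r)\otimes_{\Sigma_r}\Bigl(\bigoplus_{i_1+\cdots+i_r=k} N(i_1)\otimes\cdots\otimes N(i_r)\otimes_{\Sigma_{i_1}\times\cdots\times\Sigma_{i_r}}\field[\Sigma_k]\Bigr),
\]
so that a summand indexed by $(r;i_1,\dots,i_r)$ with $i_1+\cdots+i_r=k$ involves the component $M(r)$ and the components $N(i_1),\dots,N(i_r)$. First I would observe that applying $\theta_n$ to $M\circ N$ simply kills every homogeneous piece in arity $k>n$, i.e. it keeps precisely the summands with $i_1+\cdots+i_r=k\leq n$.

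The key step is the following elementary bound on indices: if $i_1+\cdots+i_r=k\leq n$ and each $i_j\geq 1$, then $r\leq k\leq n$ and $i_j\leq k\leq n$ for every $j$. Hence in every summand of $(M\circ N)(k)$ with $k\leq n$, the factor $M(r)$ can be replaced by $\theta_n(M)(r)=M(r)$ and each factor $N(i_j)$ by $\theta_n(N)(i_j)=N(i_j)$ without change; conversely, no summand of $(\theta_n(M)\circ\theta_n(N))(k)$ with $k\leq n$ is lost, since truncation only removes arities exceeding $n$, which by the bound never occur among these summands. Therefore $(M\circ N)(k)=(\theta_n(M)\circ\theta_n(N))(k)$ for all $k\leq n$. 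Applying $\theta_n$ to both sides (which is the identity in arities $\leq n$ and zero above) yields $\theta_n(M\circ N)=\theta_n(\theta_n(M)\circ\theta_n(N))$, which is the claimed identity.

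I would also remark that the identification is natural and respects the $\Sigma_k$-actions and internal differentials, since truncation $\theta_n$ is defined componentwise and the composition product is built arity-by-arity from tensor products and induced representations, all of which are preserved termwise. The only point requiring a little care — and the place I expect to spend the most words — is making the index bookkeeping precise: verifying that the decomposition of $(M\circ N)(k)$ into summands over $(r;i_1,\dots,i_r)$ matches summand-for-summand with that of $(\theta_n(M)\circ\theta_n(N))(k)$ for $k\le n$, including the quotients by the symmetric group actions. This is routine once the arity bound $r,i_j\le k$ is in hand, so the proof is short.
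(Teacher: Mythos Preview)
Your proposal is correct and follows essentially the same approach as the paper: the paper's one-line proof (``in a composition the arities of the operations composed have lower arities than the resulting operations, because we only consider operations with arities $\geq 1$'') is exactly your key bound $r,i_j\leq k$ when $i_1+\cdots+i_r=k$ with all $i_j\geq 1$. You have simply unpacked the composition product explicitly and verified the summand-by-summand matching that the paper leaves implicit.
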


		\begin{proof}
		Observe that in a composition the arities of the operations
		composed have lower arities than the resulting operations,
		because we only consider operations with arities $\geq 1$. 
		\end{proof}

		\begin{definition}
		\label{Def-cat-OPn}
		Let $\mathcal{OP}_n$ be the subcategory of the category of operads which are null for arities $>n$.
		We call the objects of $\mathcal{OP}_n$, $n$-operads.
		$T_n:\mathcal{OP}\to \mathcal{OP}_n$ is the functor which assigns to each operad the
		quotient $n$-operad determined by the operad ideal of operations of arities $>n$.
		In other words, the resulting operad has null compositions when the resulting operation
		has arity $>n$. $I:\mathcal{OP}_n\to \mathcal{OP}$ is the inclusion functor
		and satisfies $T_n\circ I = 1_{\mathcal{OP}_n}$, for every $n>1$.
		\end{definition}

		Next proposition says that in order to construct the part of $F(M)$ with arity at most $n$,
		is enough to consider operations of $M$ with arity at most $n$. 

		\begin{proposition}
		\label{prop-TFFTN}
		For $n>1$, $T_n\circ F=T_n\circ F\circ \theta_n$
		\end{proposition}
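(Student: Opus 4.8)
My plan is to deduce the equality from a Yoneda argument in the category $\mathcal{OP}_n$, using only the adjunction $F\dashv U$ and an elementary reflection of $\mathcal{OP}_n$ inside $\mathcal{OP}$; the underlying intuition is the one already stated before the proposition, namely that a tree with at most $n$ leaves only carries decorations of arity at most $n$. Concretely, let $\iota_M\colon\theta_n(M)\to M$ be the canonical inclusion of $\mathbb{S}$-modules — it exists because $\theta_n(M)(k)$ equals $M(k)$ for $k\le n$ and $0$ for $k>n$ — and it is natural in $M$. Applying $T_n\circ F$ produces a natural transformation $T_nF(\iota_M)\colon T_nF(\theta_nM)\to T_nF(M)$, and it suffices to show each component is an isomorphism.

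First I would record that the inclusion $I\colon\mathcal{OP}_n\to\mathcal{OP}$ of Definition \ref{Def-cat-OPn} is fully faithful and admits $T_n$ as a left adjoint: any operad morphism from $\mathcal{P}$ into an $n$-operad annihilates the operad ideal of operations of arity $>n$, hence factors uniquely through the quotient $T_n(\mathcal{P})$; this is consistent with the identity $T_n\circ I=1_{\mathcal{OP}_n}$ already noted. Then, for an arbitrary $n$-operad $\mathcal{Q}$, I would chain the natural bijections
\begin{equation}
\begin{gathered}
\mathcal{OP}_n(T_nF(M),\mathcal{Q})\;\cong\;\mathcal{OP}(F(M),I\mathcal{Q})\;\cong\;\mathbb{S}\text{-Mod}(M,UI\mathcal{Q}),
\end{gathered}
\end{equation}
the first coming from $T_n\dashv I$ and the second from $F\dashv U$. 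The key step is the middle identification: since $\mathcal{Q}$ has no operations of arity $>n$, the $\mathbb{S}$-module $UI\mathcal{Q}$ satisfies $\theta_n(UI\mathcal{Q})=UI\mathcal{Q}$, so every morphism $M\to UI\mathcal{Q}$ is determined by its restriction along $\iota_M$ (one inverts by extending with zero in arities $>n$), which gives a natural bijection
\begin{equation}
\begin{gathered}
\mathbb{S}\text{-Mod}(M,UI\mathcal{Q})\;\cong\;\mathbb{S}\text{-Mod}(\theta_nM,UI\mathcal{Q}).
\end{gathered}
\end{equation}
Running the first chain backwards with $\theta_nM$ in place of $M$ identifies this hom-set with $\mathcal{OP}_n(T_nF(\theta_nM),\mathcal{Q})$, and composing everything yields a bijection $\mathcal{OP}_n(T_nF(M),\mathcal{Q})\cong\mathcal{OP}_n(T_nF(\theta_nM),\mathcal{Q})$, natural in both $\mathcal{Q}$ and $M$.

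The step I expect to be the main obstacle is checking that this composite bijection is precisely precomposition with $T_nF(\iota_M)$; this is a diagram chase through the units and counits of the three adjunctions involved, after which the Yoneda lemma in $\mathcal{OP}_n$ forces $T_nF(\iota_M)$ to be an isomorphism, natural in $M$, which is the asserted identity $T_n\circ F=T_n\circ F\circ\theta_n$. As a more hands-on alternative, which I would keep in reserve, one can argue directly with the tree model of the free operad together with Proposition \ref{prop-n-Mod-M-N}: in arity $k\le n$ the free operad $F(M)$ is a colimit over trees with $k$ leaves, every vertex of such a tree has arity at most $k\le n$, so the colimit diagram only involves $\theta_n(M)$ and one obtains the equality on the nose; there the only delicate point is the bookkeeping of which trees and decorations occur.
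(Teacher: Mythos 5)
Your argument is correct, but it takes a genuinely different route from the paper's. The paper proves the identity by induction over the stepwise construction of the free operad, writing $F$ as a filtered colimit built from $F_{k+1}(M)=I\oplus(M\circ F_k(M))$ and invoking Proposition \ref{prop-n-Mod-M-N} (compatibility of $\theta_n$ with the composite product $\circ$) at each stage, then letting $\theta_n$ pass through the filtered colimit and become $T_n$ once the operad structure is added; this is essentially the ``hands-on alternative'' you keep in reserve. Your main argument is instead purely formal: it chains the adjunctions $T_n\dashv I$ and $F\dashv U$ with the elementary observation that a morphism of $\mathbb{S}$-modules into a target vanishing in arities $>n$ is the same thing as a morphism out of $\theta_n M$, and concludes by Yoneda that $T_nF(\iota_M)$ is an isomorphism; the identification of the composite bijection with precomposition by $T_nF(\iota_M)$ is indeed just naturality of the two adjunction bijections. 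What this buys is independence from the explicit tree/colimit model of $F$ and from Proposition \ref{prop-n-Mod-M-N}; what the paper's induction buys is the identity at the level of underlying $\mathbb{S}$-modules with no appeal to representability. Two small remarks: (i) you correctly read $T_n$ as the \emph{left} adjoint of $I$, which is what the proof of Proposition \ref{prop-Tn-I-Adjunction} actually establishes (and what Lemma \ref{lemma-alpha-iso} needs, since it applies $T_n$ to a pushout); (ii) your route yields a canonical natural isomorphism $T_nF(\theta_n M)\cong T_nF(M)$ rather than the literal equality of functors in the statement, but that is all that is used downstream, where one only needs that $T_nF(i)$ is an isomorphism whenever $\theta_n(i)$ is.
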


		\begin{proof}
		By induction over the construction of $F$. $\theta_nF_{k+1}(M)=\theta_n(I\oplus (M\circ F_k(M)))$,
		which is equal to $\theta_n(I\oplus \theta_n(M\circ F_k(M)))$, by proposition \ref{prop-n-Mod-M-N}
		this is $\theta_n(I\oplus (\theta_n(M)\circ \theta_n(F_k(M)))$ and by induction
		hypothesis we obtain $\theta_nF_{k+1}(M)=\theta_n(I\oplus (\theta_n(M)\circ F_k(\theta_n(M))))$,
		and then $\theta_n$ commute with the colimit defining $F$ because it is filtering.
		Finally $\theta_n$ becomes $T_n$ when the operad structure is added.
		\end{proof}

		\begin{proposition}\label{prop-Tn-I-Adjunction}
		For every $n>1$ we have the adjunction $I\vdash T_n:\mathcal{OP}\to \mathcal{OP}_n$.
		\end{proposition}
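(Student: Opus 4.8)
The plan is to establish the adjunction directly from its unit and counit. By Definition~\ref{Def-cat-OPn} we already have $T_n\circ I=1_{\mathcal{OP}_n}$, so I would take the counit $\varepsilon\colon T_n I\Rightarrow 1_{\mathcal{OP}_n}$ to be the identity natural transformation; the whole content then reduces to constructing a unit $\eta\colon 1_{\mathcal{OP}}\Rightarrow I\,T_n$ together with the two triangle identities, both of which collapse to trivialities once $\varepsilon=\mathrm{id}$.

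For the unit, I would fix an operad $\mathcal{P}$, set $\mathcal{I}(\mathcal{P})=\bigoplus_{k>n}\mathcal{P}(k)$, and first check that $\mathcal{I}(\mathcal{P})$ is an operad ideal. This is the only step needing genuine care: since all arities are $\ge 1$, a composite $\gamma(x;y_1,\dots,y_r)$ has arity $j_1+\cdots+j_r$, which is both $\ge r$ and $\ge j_i$ for each $i$; hence if the outer operation $x$ or any inner operation $y_i$ lies in $\mathcal{I}(\mathcal{P})$ then so does the composite, and $\mathcal{I}(\mathcal{P})$ is visibly a sub-$\mathbb{S}$-module. Consequently $T_n(\mathcal{P})=\mathcal{P}/\mathcal{I}(\mathcal{P})$ is an $n$-operad, and I would let $\eta_{\mathcal{P}}\colon\mathcal{P}\to I\,T_n(\mathcal{P})$ be the canonical projection, which is the identity in arities $\le n$ and zero in arities $>n$. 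It is a morphism of operads because units, composites and $\Sigma_k$-actions of $T_n(\mathcal{P})$ are by definition those induced from $\mathcal{P}$, and it is natural because any operad morphism carries arity-$>n$ operations to arity-$>n$ operations and hence passes to the quotients.

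Finally I would check the triangle identities. Since $\varepsilon=\mathrm{id}$, the identity $(I\varepsilon)\circ(\eta I)=\mathrm{id}_I$ reduces to $\eta_{I(\mathcal{Q})}=\mathrm{id}_{I(\mathcal{Q})}$ for every $n$-operad $\mathcal{Q}$, which holds because $I(\mathcal{Q})$ carries no operations of arity $>n$, so $\mathcal{I}(I(\mathcal{Q}))=0$; and $(\varepsilon T_n)\circ(T_n\eta)=\mathrm{id}_{T_n}$ reduces to $T_n(\eta_{\mathcal{P}})=\mathrm{id}_{T_n(\mathcal{P})}$, which holds because $\eta_{\mathcal{P}}$ is already the identity in all arities $\le n$ while $T_n$ records only those arities. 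This yields the adjunction, equivalently the natural bijection $\mathcal{OP}(\mathcal{P},I(\mathcal{Q}))\cong\mathcal{OP}_n(T_n(\mathcal{P}),\mathcal{Q})$ carrying $\phi$ to $T_n(\phi)$ (which lands in $T_n I(\mathcal{Q})=\mathcal{Q}$) with inverse $\psi\mapsto I(\psi)\circ\eta_{\mathcal{P}}$. The main obstacle, such as it is, is the ideal check above; after that the argument is purely formal bookkeeping with the identity $T_n\circ I=1_{\mathcal{OP}_n}$.
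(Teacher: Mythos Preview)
Your proof is correct and takes essentially the same approach as the paper: both exploit $T_n\circ I=1_{\mathcal{OP}_n}$ to make the counit the identity, with the paper phrasing the remaining check as the universal-arrow property of $1_Q:T_nI(Q)\to Q$ while you verify the triangle identities for the explicit unit $\eta_{\mathcal P}$. Your explicit verification that the arity-$>n$ operations form an operad ideal is in fact more careful than the paper's one-line sketch.
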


		\begin{proof}
		We only have to check for every $n\text{-}\mathbb{S}$-module $Q$ that the identity 
		$1_Q:T_nI(Q)\to Q$ in $\mathcal{OP}_n$ is a universal
		arrow from the functor $T_n$ to the object $Q$.
		Observe that every morphism of the form $g:T_n(P)\to Q$ determines a morphism $\varphi:P\to I(Q)$
		by taking $\varphi(k)=0$ if $k>n$ and equal to $g$ in lower degrees.

		\begin{equation}
		\begin{gathered}
		\xymatrix{
		T_nI(Q)\ar[rr]^-1 && Q\\
		&T_n(P) \ar[ul]^-{T_n(Q)} \ar[ur]_-{g}&
		}
		\end{gathered}
		\end{equation}

		\end{proof}

		\begin{definition}
		An $n$-isomorphism $f:P\to Q$ of operads is an morphism of operads such that 
		$T_n(f)$ is an isomorphism of $n$-operads.
		\end{definition}

		\begin{lemma}
		\label{lemma-alpha-iso}
		In the following diagram from the definition of polynomial operads,
		\begin{equation}
		\label{prop-diagram-pol-op}
		\begin{gathered}
		\xymatrix{
		FU(\mathcal{E})\ar[r]^-\epsilon \ar[d]_-{F(i)} & \mathcal{E} \ar[d]^-\alpha \\
		F(M) \ar[r]_-{\beta} & \mathcal{E}[M]
		}
		\end{gathered}
		\end{equation}
		if $i$ is an $n$-isomorphism (of $\mathbb{S}$-modules) then the operad morphism $\alpha:\mathcal{E}\to \mathcal{E}[M]$ is an $n$-isomorphism.

		\end{lemma}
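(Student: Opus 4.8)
The plan is to analyze the colimit defining $\mathcal{E}[M]$ after applying the truncation functor $T_n$, and show that the truncated pushout square collapses so that $T_n(\alpha)$ becomes an isomorphism. First I would recall that $\mathcal{E}[M]$ is, by construction (Theorem~\ref{th-adj-poly-operads}), the colimit in $\mathcal{OP}$ of the span $\mathcal{E}\xleftarrow{\epsilon_{\mathcal{E}}} FU(\mathcal{E})\xrightarrow{F(i)} F(M)$, i.e. the pushout. Since $T_n$ is a left adjoint by Proposition~\ref{prop-Tn-I-Adjunction} (with left adjoint $I$ and right adjoint $T_n$; note $T_n$ is the one preserving colimits here, as it is left adjoint to $I$), $T_n$ preserves this pushout. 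Hence $T_n(\mathcal{E}[M])$ is the pushout in $\mathcal{OP}_n$ of $T_n(\mathcal{E})\xleftarrow{T_n(\epsilon_{\mathcal{E}})} T_nFU(\mathcal{E})\xrightarrow{T_nF(i)} T_nF(M)$, and $T_n(\alpha)$ is the canonical map from $T_n(\mathcal{E})$ into this pushout.

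Next I would use Proposition~\ref{prop-TFFTN}, which gives $T_nF = T_nF\theta_n$, together with the hypothesis that $i\colon U(\mathcal{E})\to M$ is an $n$-isomorphism, i.e. $\theta_n(i)\colon\theta_n U(\mathcal{E})\to\theta_n(M)$ is an isomorphism of $n$-$\mathbb{S}$-modules. Applying $T_nF$ and using $T_nF=T_nF\theta_n$, it follows that $T_nF(i)\colon T_nFU(\mathcal{E})\to T_nF(M)$ is an isomorphism of $n$-operads. Now the pushout in $\mathcal{OP}_n$ of a span in which one leg is an isomorphism is computed by the other leg: since $T_nF(i)$ is invertible, the pushout of $T_n(\mathcal{E})\xleftarrow{T_n(\epsilon_{\mathcal{E}})} T_nFU(\mathcal{E})\xrightarrow{\sim} T_nF(M)$ is just $T_n(\mathcal{E})$ itself, with structure map $T_n(\alpha)$ the identity on $T_n(\mathcal{E})$ (equivalently, $T_n(\alpha)$ is invertible, its inverse being induced by $\mathrm{id}_{T_n(\mathcal{E})}$ on one leg and $T_n(\epsilon_{\mathcal{E}})\circ (T_nF(i))^{-1}$ on the other). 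Therefore $T_n(\alpha)$ is an isomorphism of $n$-operads, which is exactly the assertion that $\alpha$ is an $n$-isomorphism.

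The step I expect to be the main obstacle is the bookkeeping that $T_n$ genuinely preserves the relevant colimit and that ``pushout along an isomorphism is trivial'' is applied in the correct category: one must be careful that the pushout in $\mathcal{OP}$, after applying $T_n$, really is the pushout in $\mathcal{OP}_n$ (this is where $T_n$ being a left adjoint is used) and that the isomorphism $T_nF(i)$ is an isomorphism \emph{of operads} in $\mathcal{OP}_n$, not merely of underlying $n$-$\mathbb{S}$-modules — which it is, since $F(i)$ is a morphism of operads and $T_n$ is a functor on operads. A secondary point to verify carefully is the compatibility of Proposition~\ref{prop-TFFTN} with the morphism $i$ rather than just on objects, i.e. that the natural transformation $T_nF\Rightarrow T_nF\theta_n$ is indeed natural, so that the square relating $T_nF(i)$ and $T_nF(\theta_n(i))$ commutes; once this naturality is in hand the argument is immediate.
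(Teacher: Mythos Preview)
Your proposal is correct and follows essentially the same argument as the paper: use Proposition~\ref{prop-TFFTN} to see that $T_nF(i)=T_nF(\theta_n(i))$ is an isomorphism, use Proposition~\ref{prop-Tn-I-Adjunction} to conclude that $T_n$ preserves the defining pushout, and then invoke the fact that a pushout along an isomorphism yields an isomorphism on the opposite side. Your parenthetical about the direction of the adjunction is slightly tangled (you write ``left adjoint $I$ and right adjoint $T_n$'' and then ``$T_n$ is left adjoint to $I$''), but you land on the correct conclusion that $T_n$ is the left adjoint and hence preserves colimits; the paper's proof uses exactly this, only more tersely.
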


		\begin{proof}

		By \ref{prop-TFFTN} $(T_n\circ F)(i)=(T_n\circ F\circ \theta_n)(i)$, but $\theta_n(i)$ is an
		isomorphism, so that $T_nF(i)$ is also an isomorphism, which means that $F(i)$ is an $n$-isomorphism.
		By \ref{prop-Tn-I-Adjunction} the following diagram is cocartesian,

		\begin{equation}\label{prop-diagram-pol-op}
		\begin{gathered}
		\xymatrix{
		T_nFU(\mathcal{E})\ar[r]^-{T_n(\epsilon)} \ar[d]_-{T_nF(i)} & T_n\mathcal{E} \ar[d]^-{T_n(\alpha)} \\
		T_nF(M) \ar[r]_-{T_n(\beta)} & T_n\mathcal{E}[M]
		}
		\end{gathered}
		\end{equation}

		so that $T_n(\alpha)$ is an isomorphism, which means that $\alpha$ is an $n$-isomorphism.
		\end{proof}


\section{$\mathcal{L}$-Algebras}
\label{section-l-algebras}

	Associated to chain complexes of DGA modules, there are applications like the Eilenberg-Maclane 
	transformation and the Alexander-Whitney diagonal, both gives important information about
	the homotopy type of spaces (cf. \cite{may2007}). Under certain conditions the Alexander-Whitney diagonal
	can be described by unicity (cf. \cite{alain-transf-EM-1983}, \cite{alain-transf-AW-1984}).
	Following these ideas, with $\mathcal{L}$-algebras Alain Prouté associated to each space a structure
	with abstract properties inspired in such of chain complexes that allow unicity of Alexander-Whitney diagonal.

\subsection{The Category $\mathcal{L}$}

	With $\mathcal{L}$-algebras our principal interest is to model
	the relations describing the behavior of diagonals in chain complexes.
	This can be done by using an approach similar to simplicial objects, 
	that is, defining $\mathcal{L}$-algebras as contravariant functors from a suitable category. 
	This category will be denoted $\mathcal{L}$. 

	The objects of the category used by G. Segal, $\Gamma$ (see \cite{SEGAL1974293}) 
	are the finite sets, 
	and a morphism from $x$ to $y$ is an application $f:x\to \mathcal{P}(y)$ 
	\footnote[2]{$\mathcal{P}(y)$ is the set of subsets of $y$.}
	such that  $z_1\neq z_2$ implies $f(z_1)\cap f(z_2)=\emptyset$.
	Then we have the isomorphisms of categories 
	$\Gamma^{\text{op}}\cong \mathcal{L}$ and
	$\mathcal{L}^{\text{op}}\cong \Gamma$.

		\begin{definition}
		\label{definition-category-L}
			We define $\mathcal{L}$ to be the category with objects 
			totally ordered sets $[n]=\{1,\ldots, n\}$ for $n>0$ and $[0]=\emptyset$, 
			the empty set. Morphisms in $\mathcal{L}$ are taken to be all 
			the partial maps between these sets.
		\end{definition}

	An morphism $\alpha:[n]\to [m]$ of $\mathcal{L}$ could be described by a pair 
	$(\text{Dom}(f),f)$, where $\text{Dom}(f)\subseteq [n]$ is the domain of $f$.

	When a cocartesian category has a zero object is called pointed cocartesian category. 
	Furthermore, if the zero and the sum are explicitly given, 
	the category is called strict pointed cocartesian category.
	$\mathcal{L}$ is a strict pointed cocartesian category,
	with the strictly associative sum $[n]+[m]:=[n+m]$, 
	and with zero object $[0]$.

	\begin{definition}
	\label{df-special-arrows-L}
	In $\mathcal{L}$ we identify the following arrows.

		\begin{enumerate}
		\item The face operator $d_i:[n]\to [n+1]$ is defined for $1\leq i \leq n+1$ by:
		\begin{equation}
		\begin{gathered}
		d_i(x)=\left\{ 
		\begin{array}{l l}
		  x & \quad \text{if $x< i$}\\
		  x+1 & \quad \text{if $x\geq i$}\\ \end{array} \right. 
		\end{gathered}
		\end{equation}

		When $n=0$, the only face operator $d_1:[0]\to [1]$ is the universal morphism from $[0]$.

		\item The degeneracy operator $s_i:[n]\to [n-1]$ is defined for $1\leq i \leq n$ by:

		\begin{equation}
		\begin{gathered}
		s_i(x)=\left\{ 
		\begin{array}{l l}
		  x & \quad \text{if $x\leq i$}\\
		  x-1 & \quad \text{if $x> i$}\\ \end{array} \right. 
		\end{gathered}
		\end{equation}

		In the case $n=1$, the only degeneracy operator $s_1:[1]\to [0]$ is the universal morphism to $[0]$.

		\item In $\mathcal{L}$, any injective map $i:[n]\to [m]$ of the form $([n],i)$
		has a unique minimal retraction,
		denoted by $\overline{i}:[m]\to[n]$, in other words, $\overline{i}$ is the 
		only morphism with domain given by the image of $i$ and which satisfies the relation 
		$\overline{i}\circ i=1_{[n]}$.
		In particular, the minimal retraction associated to the face operator $d_i$ will be denoted $\zeta_i$.
		For $d_1:[0]\to[1]$, its minimal retraction $\zeta_1:[1]\to [0]$ coincide with $s_1:[1]\to [0]$.

		\end{enumerate}
	\end{definition}

	All morphisms of $\mathcal{L}$ can be generated by sum and compositions of the following five arrows.

	\begin{equation}
	\begin{gathered}
	\xymatrix@C=4pc{
	[0] \ar@/^1pc/[r]^-{d_1} & [1] \ar@(r,u)[]_1 \ar@/^1pc/[l]^-{\zeta_1}& [2] \ar@(r,u)[]_\tau \ar[l]^-{s_1}
	}
	\end{gathered}
	\end{equation}

	Category $\mathcal{L}$ can be characterized  as the
	free strictly associative pointed cocartesian category on one object,
	as the following proposition shows.

	\begin{proposition}
	\label{free-cocartesian-category}   
	Let $\mathcal{C}$ be a strictly associative pointed cocartesian category, and $X$ an
	object of $\mathcal{C}$. Then there an unique functor $F:\mathcal{L}\to \mathcal{C}$
	preserving zero and coproducts and such that $F([1])=X$.
	\end{proposition}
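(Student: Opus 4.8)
The plan is to define the functor $F$ on the generators of $\mathcal{L}$ described just above the statement — namely $d_1:[0]\to[1]$, $\mathrm{id}:[1]\to[1]$, $\zeta_1:[1]\to[0]$, $s_1:[2]\to[1]$, and $\tau:[2]\to[2]$ — and then extend by sums and composition, checking that the relations holding among these generators are forced by the requirement that $F$ preserve the zero object and coproducts. First I would set $F([n]) := X^{+n}$, the $n$-fold coproduct $X+\cdots+X$ in $\mathcal{C}$ (with $F([0])=0$, the zero object, which is legitimate since $\mathcal{C}$ is \emph{strictly} associative pointed cocartesian, so these iterated coproducts are literally associative and there is no coherence bookkeeping). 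On morphisms: since every morphism of $\mathcal{L}$ is, by the remark preceding the statement, a sum/composite of the five generators, it suffices to say where the generators go. The non-degenerate face $d_1:[0]\to[1]$ must go to the unique map $0\to X$; the degeneracy $\zeta_1 = s_1:[1]\to[0]$ must go to the unique map $X\to 0$; the symmetry $\tau:[2]\to[2]$ goes to the canonical switch $X+X\to X+X$; and $s_1:[2]\to[1]$ goes to the fold map $\nabla: X+X\to X$ (which exists in any category with binary coproducts, via $[\mathrm{id},\mathrm{id}]$). Uniqueness of $F([1])=X$ is imposed by hypothesis.

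The core of the argument is then a well-definedness check: one must verify that $F$ respects the relations in $\mathcal{L}$ among these generators. The cleanest way to organize this is to use the characterization of $\mathcal{L}$ (equivalently $\Gamma^{\mathrm{op}}$) as a category whose morphisms $[n]\to[m]$ are partial maps: a morphism $\alpha:[n]\to[m]$ is specified by a subset $D\subseteq[n]$ and an actual map $D\to[m]$. I would show directly that such an $\alpha$ determines a morphism $X^{+n}\to X^{+m}$ in $\mathcal{C}$: for each $i\in[n]$, the $i$-th coproduct inclusion $X\to X^{+n}$ should be sent to the $\alpha(i)$-th inclusion $X\to X^{+m}$ if $i\in D$, and to the zero map $X\to 0\to X^{+m}$ if $i\notin D$; by the universal property of coproducts this assembles into a unique arrow $F(\alpha):X^{+n}\to X^{+m}$. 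Functoriality — $F(\beta\circ\alpha)=F(\beta)\circ F(\alpha)$ and $F(\mathrm{id})=\mathrm{id}$ — is then an immediate consequence of tracking what happens on each coproduct summand, using that composition of partial maps is "compose where defined, undefined otherwise", which matches composing-with-zero in $\mathcal{C}$. Preservation of zero ($F([0])=0$) and of coproducts ($F([n]+[m])=F([n])+F([m])$, with the structural inclusions going to the structural inclusions) is built into the definition by strict associativity.

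For uniqueness of $F$: suppose $G:\mathcal{L}\to\mathcal{C}$ is another functor preserving zero and coproducts with $G([1])=X$. Then $G([n]) = G([1]+\cdots+[1]) = X+\cdots+X = F([n])$ on objects (again using strictness). On morphisms, since $G$ preserves coproducts it must send the coproduct inclusions $[1]\hookrightarrow[n]$ of $\mathcal{L}$ to the coproduct inclusions of $X^{+n}$; and since $\mathcal{L}$ is generated under coproducts and composition by maps out of $[1]$, and $G$ preserves zero (pinning down the morphisms factoring through $[0]$) and fixes $[1]$, the value of $G$ on every generator is forced, hence $G=F$. The main obstacle I anticipate is purely bookkeeping rather than conceptual: one has to be careful that "preserves coproducts" is interpreted strongly enough (preserving the coproduct \emph{inclusions}, not merely the objects up to isomorphism) for uniqueness to hold on the nose, and to confirm that the five listed arrows really do generate \emph{all} partial maps — the face, degeneracy, symmetry, and the two basepoint maps between $[0]$ and $[1]$ — so that checking well-definedness on them suffices. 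Given the partial-map description, both of these are routine, and no hard calculation is involved.
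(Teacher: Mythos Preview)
Your proposal is correct and follows essentially the same approach as the paper: both define $F$ on objects as iterated coproducts and argue that the images of the five generating arrows $d_1,\zeta_1,1,s_1,\tau$ are forced by preservation of zero and coproducts. If anything, you are more thorough than the paper, which records only the uniqueness direction (the mandatory images of the generators) and leaves the well-definedness and functoriality checks implicit; your alternative direct description of $F(\alpha)$ via the partial-map presentation is a nice addition that makes existence explicit.
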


	\begin{proof}
	Indeed, $F([n])$ must be the $n$-fold sum $X+\overset{(n)}{\cdots}+X$ and $F([0])$ must be the zero
	object of $\mathcal{C}$. The five morphisms above have mandatory images by $F$,
	this means that $F(1)=1_X$, $d_1$ and $\zeta_1$
	are send to the unique morphisms $0\to X$ and $X\to 0$, where $0$ is the zero object of $\mathcal{C}$,
	the image of $s_1$ is the codiagonal of $[1]$, 
	that is the morphisms $[1]+[1]\to [1]$ obtained by the universal property
	of coproduct, so its image by $F$ must be $X+X\to X$, the codiagonal of $X$,
	which is well defined because $\mathcal{C}$ is cocartesian, that is, the sum is well defined.
	And $\sigma$ which is the canonical twisting arrow of the sum $[1]+[1]$,
	should be send to the canonical twisting arrow of $X+X$.
	\end{proof}

	Similarly, the opposite category $\mathcal{L}^{\text{op}}$ of $\mathcal{L}$ is
	characterized as the free strictly associative pointed cartesian category 
	generated by $[1]$.

\subsection{$\mathcal{L}$-Algebras}
\label{subsection-l-algebras}


An $\mathcal{L}$-algebra is a contravariant functor 
from $\mathcal{L}$ to a category with a notion of homology,
together with a natural transformation $\mu$, which is called
the product of the $\mathcal{L}$-algebra. 
Homotopy coherence is reflected in requiring product $\mu$ induces isomorphisms in homology.
Then, $\mathcal{L}$-algebra are defined in categories equipped with quasi-isomorphisms,
that is, with a distinguished class of arrows, called quasi-isomorphisms,
which forms a subcategory. The only categories
of this kind we will use are $\dgacat$
and $\dga$-Alg, both over a field $\field$, where being an quasi-isomorphisms means inducing an
isomorphism in homology.

	\begin{definition}
	[$\mathcal{L}$-algebra]
	\label{df-L-algebra}
	Let $(\mathcal{C},\otimes,k,T)$ be a strict symmetric monoidal category with quasi-isomorphisms.
	An $\mathcal{L}$-algebra $A$ with values in the category $\mathcal{C}$ consists of a functor,

		\begin{align}
		A:\mathcal{L}^{\text{op}}\to \mathcal{C}
		\end{align}

	together with a natural transformation 
	$\mu:\otimes \circ (\mathcal{A}\times \mathcal{A})\to \mathcal{A}\circ +$.

	The morphism in $\mathcal{C}$ that $\mu$ associates to each pair $([n],[m])\in\mathcal{L}\times \mathcal{L}$, 
	goes from $A[n]\otimes A[m]$ to $A[n+m]$ and is written $\mu_{[n],[m]}$. The image of any arrow $\alpha$ 
	is simply written again as $\alpha$,
	but this image goes in the opposite direction.
	Then, for every pair of arrows $\alpha:[p]\to [n]$ and $\beta:[q]\to [m]$ in $\mathcal{L}$
	we have the following commutative diagram.

		\begin{equation}
		\begin{gathered}
		\xymatrix@C=6pc{
		A[n]\otimes A[m]\ar[r]^-{\mu_{[n],[m]}} & A[n+m]\\
		A[p]\otimes A[q]\ar[r]^-{\mu_{[p],[q]}} \ar@{<-}[u]^-{\alpha \otimes \beta} 
				& A[p+q] \ar@{<-}[u]_-{\alpha +\beta}
		}
		\end{gathered}
		\end{equation}

	The functor $\mathcal{A}$ and 
	the natural transformation $\mu$ 
	are required to satisfy the following conditions.

	\begin{enumerate}
	\item \textbf{Associativity:} $\mu \circ (\mu \otimes 1)=\mu \circ (1\otimes \mu)$. 

	\item \textbf{Commutativity:} Let $[n]$, $[m]$ in $\mathcal{L}$, and $\tau:[m+n]\to [n+m]$ be the 
	twisting morphism of $[m]+[n]$, then the following diagram commutes.

	\begin{equation}
	\begin{gathered}
	\xymatrix{
	A[n]\otimes A[m] \ar[d]_-T \ar[r]^-\mu & A[n+m]\ar[d]^-\tau \\
	A[m]\otimes A[n] \ar[r]^-\mu & A[m+n]
	}
	\end{gathered}
	\end{equation}

	\item \textbf{Unit:} The image of $[0]$ by $\mathcal{A}$ is $k$ and 
	$\mu_{[0],[n]}=\mu_{[n],[0]}=1$.

	\item \textbf{Coherence:} For every pair $([n],[m])$, the morphism 
	$ \mu_{[n],[m]}$ is a quasi-isomorphism.

	\end{enumerate}
	The natural transformation $\mu$ is called the product 
	or the structural quasi-isomorphism of $\mathcal{A}$. 
	Also, in order to simplify the expressions 
	we drop the indexes of $\mu_{[n],[m]}$ and simply write $\mu$ when necessary. 
	\end{definition}

	There is an degenerated case of $\mathcal{L}$-algebra
	when 
	$A[n]=A[1]^{\otimes n}$ for every $n\leq1$ and
	$\mu$ is taken to be the identity,
	which implies that
	$s_1:A[1]\to A[1]\otimes A[1]$ is a commutative coproduct .
	Indeed, in $\mathcal{L}$ we have the following commutative diagram.

	\vspace{-1.5cm}
	\begin{equation}
	\begin{gathered}
	\xymatrix{
	[1] & [2] \ar[l]_-{s_1} \ar[d]^-{\tau}\\
	 & [2] \ar[ul]^-{s_1}
	}
	\begin{matrix}
	{}&{}\\
	{}&{}\\
	{}&{}\\
	{}&{}\\
	{}&{}\\
	{}&\Longrightarrow \\
	{}&{}
	\end{matrix}
	\xymatrix{
	A[1] \ar[dr]_-{s_1} \ar[r]_-{s_1} & A[2] \ar[d]^-{\tau}& A[1]\otimes A[1] \ar[l]^-{\mu=1}\ar[d]^-{T} \\
	 & A[2] & A[1]\otimes A[1]  \ar[l]^-{\mu=1}
	}
	\end{gathered}
	\end{equation}

	Taking $T=\tau$ and $T\circ s_1=s_1:A[1]\to A[1]\otimes A[1]$. 
	The fact that the coproduct is commutative implies 
	that higher homotopies of diagonals are zero, 
	making this $\mathcal{L}$-algebras not very interesting for us. 
	The reason is, 
	$\mathcal{L}$-algebras are supposed to model the behavior of systems
	of diagonals like the one found in the chain complex
	associated to a simplicial set. In that case the diagonals obtained 
	from homotopy inverses of the Eilenberg-Mac~Lane transformation are not commutative, 
	because of the existence of Steenrod operations. 

	\begin{proposition}
	[The category of $\mathcal{L}$-algebras]
	\label{prop-def-L-algebras}
	$\mathcal{L}$-algebras with values in a category $\mathcal{C}$ 
	together with 
	natural transformations $f:\mathcal{A}\to \mathcal{B}$ 
	such that

		\begin{enumerate}
		\item $f$ preserves the product of $\mathcal{L}$-algebras,
		that is, for every pair $([n],[m])$ the following diagram is commutative.

		\begin{equation}
		\begin{gathered}
		\xymatrix@C=6pc{
		A[n]\otimes A[m] 
		\ar[r]^-{\mu_{\mathcal{A}}}  
		\ar[d]_-{f_{[n]}\otimes f_{[m]}} & 
		A[n+m] \ar[d]^-{f_{[n+m]}}\\
		B[n]\otimes B[m] \ar[r]^-{\mu_{\mathcal{B}}} & B[n+m]
		}
		\end{gathered}
		\end{equation}

		\item $f_{[0]}:A[0]\to B[0]$ is the identity of $k$.

		\end{enumerate}
	form a category. This category will be denoted $\mathcal{L}(\mathcal{C})$.
	\end{proposition}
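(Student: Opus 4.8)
The plan is to verify that the proposed data genuinely assemble into a category, i.e.\ that morphisms compose and that identities exist, by reducing everything to the categorical structure already available on $\mathcal{C}$ and on the functor category $[\mathcal{L}^{\mathrm{op}},\mathcal{C}]$. First I would fix the ambient category $[\mathcal{L}^{\mathrm{op}},\mathcal{C}]$ of contravariant functors and natural transformations; an $\mathcal{L}$-algebra is an object of this category equipped with extra structure $\mu$ satisfying conditions (1)--(4) of Definition~\ref{df-L-algebra}, and a morphism of $\mathcal{L}$-algebras is, underlyingly, a natural transformation $f\colon\mathcal{A}\to\mathcal{B}$ in $[\mathcal{L}^{\mathrm{op}},\mathcal{C}]$ subject to the two compatibility conditions (1) and (2) of the proposition. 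So composition and identities, if they land in the right class, will automatically inherit associativity and unit laws from $[\mathcal{L}^{\mathrm{op}},\mathcal{C}]$, and no separate verification of those axioms is needed.

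The two substantive points are therefore: (a) the identity natural transformation $\mathrm{id}_{\mathcal{A}}$ satisfies conditions (1) and (2), and (b) the composite $g\circ f$ of two morphisms satisfying (1), (2) again satisfies (1), (2). For (a), condition (1) for $\mathrm{id}_{\mathcal{A}}$ is the commutativity of the square whose vertical arrows are $\mathrm{id}_{A[n]}\otimes\mathrm{id}_{A[m]}=\mathrm{id}_{A[n]\otimes A[m]}$ and $\mathrm{id}_{A[n+m]}$, which holds trivially since both composites around the square equal $\mu_{\mathcal{A}}$; condition (2) is immediate since $(\mathrm{id}_{\mathcal{A}})_{[0]}=\mathrm{id}_{A[0]}=\mathrm{id}_k$. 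For (b), given $f\colon\mathcal{A}\to\mathcal{B}$ and $g\colon\mathcal{B}\to\mathcal{C}$, stack the two naturality-type squares for $\mu$ vertically: the top square (for $f$) has left edge $f_{[n]}\otimes f_{[m]}$ and right edge $f_{[n+m]}$, the bottom square (for $g$) has left edge $g_{[n]}\otimes g_{[m]}$ and right edge $g_{[n+m]}$; pasting them gives a rectangle with left edge $(g_{[n]}\circ f_{[n]})\otimes(g_{[m]}\circ f_{[m]})=(g\circ f)_{[n]}\otimes(g\circ f)_{[m]}$ — here one uses functoriality of $-\otimes-$ to identify the composite of the two left edges — and right edge $(g\circ f)_{[n+m]}$, which is exactly condition (1) for $g\circ f$. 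Condition (2) for $g\circ f$ follows from $(g\circ f)_{[0]}=g_{[0]}\circ f_{[0]}=\mathrm{id}_k\circ\mathrm{id}_k=\mathrm{id}_k$.

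Finally I would remark that the underlying composite of natural transformations is again a natural transformation (this is just composition in $[\mathcal{L}^{\mathrm{op}},\mathcal{C}]$), so $g\circ f$ is a bona fide morphism of $\mathcal{L}$-algebras, and that associativity and the identity laws hold because they already hold in $[\mathcal{L}^{\mathrm{op}},\mathcal{C}]$ and the forgetful assignment to underlying natural transformations is injective on morphisms. This completes the construction of the category $\mathcal{L}(\mathcal{C})$.

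I do not expect any serious obstacle here: the only mild care needed is the bifunctoriality-of-$\otimes$ step in the pasting argument for condition (1), which is why the hypothesis that $\mathcal{C}$ is (symmetric) monoidal — so that $-\otimes-$ is a functor and hence $(g_{[n]}\circ f_{[n]})\otimes(g_{[m]}\circ f_{[m]})=(g_{[n]}\otimes g_{[m]})\circ(f_{[n]}\otimes f_{[m]})$ — is used. Everything else is formal diagram pasting.
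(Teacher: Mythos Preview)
Your proposal is correct and follows essentially the same approach as the paper: the paper's proof also reduces to checking that the composite $g\circ f$ of two $\mathcal{L}$-algebra morphisms again satisfies conditions (1) and (2), and it verifies condition (1) by vertically pasting the two commuting squares exactly as you describe. Your write-up is in fact slightly more thorough than the paper's, since you also explicitly treat identities and spell out the bifunctoriality-of-$\otimes$ step; the paper leaves both of these implicit.
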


	\begin{proof}
	Let $\mathcal{A}$, $\mathcal{B}$, $\mathcal{C}$ be three $\mathcal{L}$-algebras,
	$f:\mathcal{A}\to \mathcal{B}$ and $g:\mathcal{B}\to \mathcal{C}$,
	be two morphisms of $\mathcal{L}$-algebras. It suffice to check that
	the composition of natural transformation $g\circ f$ is a morphism
	of $\mathcal{L}$-algebras. Second condition is trivial and first condition 
	is consequence of the following commutative diagrams for $f$ and $g$.

	\begin{equation}
	\begin{gathered}
	\xymatrix@C=4pc{
	A[n]\otimes A[m] 
	\ar@/_4pc/[dd]_-{{(g\circ f)}_{[n]}\otimes {(g\circ f)}_{[m]}}
	\ar[r]^-{\mu_{\mathcal{A}}}  \ar[d]^-{f_{[n]}\otimes f_{[m]}} 
	& A[n+m] \ar@/^4pc/[dd]^-{{(g\circ f)}_{[n+m]}} \ar[d]^-{f_{[n+m]}}\\
	B[n]\otimes B[m] \ar[d]^-{g_{[n]}\otimes g_{[m]}} \ar[r]^-{\mu_{\mathcal{B}}} & B[n+m] \ar[d]^-{g_{[n+m]}}\\
	C[n]\otimes C[m] \ar[r]^-{\mu_{\mathcal{C}}} & C[n+m]
	}
	\end{gathered}
	\end{equation}
	\end{proof}


	For an $\mathcal{L}$-algebra $A$ with values in the category $\mathcal{C}$,
	if $\mathcal{C}$ is the category of $\dga$ algebras, 
	$A$ is called a multiplicative $\mathcal{L}$-algebra. 
	$\mathcal{L}$-algebras are designed to
	represent the $0$-reduced simplicial sets and multiplicative $\mathcal{L}$-algebras will 
	represent the $0$-reduced simplicial groups.

\subsection{The Monoidal Category of $\mathcal{L}(\mathcal{C})$}\label{sec-monoidal-struct-l-alg}

Let $T:\mathcal{L}^{\text{op}}\to \mathcal{C}$ be the functor defined by $T[n]=\field$
for every $n\geq 0$ and $T(\alpha)=1_\field$ for every morphism in $\mathcal{L}$. Together
with the natural transformation $\mu:\otimes \circ (T\times T)\to T\circ +$
defined by $\mu_{[n],[m]}=1_\field$ for all $[n],[m]\in \mathcal{L}$, 
the functor $T$ is an $\mathcal{L}$-algebra,
it is called the trivial $\mathcal{L}$-algebra with values in $\mathcal{C}$
and is denoted $\field$.

\begin{proposition}
Let $\mathcal{C}$ be the category $\dga$-modules. 
Then the trivial $\mathcal{L}$-algebra $\field$ is a zero object in $\mathcal{L}(\mathcal{C})$.
\end{proposition}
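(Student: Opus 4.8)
The plan is to show that the trivial $\mathcal{L}$-algebra $\field$ is both initial and terminal in $\mathcal{L}(\mathcal{C})$, hence a zero object. Write $\iota_n:[0]\to[n]$ and $\pi_n:[n]\to[0]$ for the unique arrows out of and into the zero object $[0]$ of $\mathcal{L}$; since $[0]$ is a zero object, any composite of an arrow with one of the $\iota_\bullet$ (resp.\ $\pi_\bullet$) is again of that form, e.g.\ $\alpha\circ\iota_p=\iota_n$ for $\alpha:[p]\to[n]$ and $\pi_n\circ\alpha=\pi_p$, and also $\iota_n+\iota_m=\iota_{n+m}$, $\pi_n+\pi_m=\pi_{n+m}$. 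Recall that the unit axiom for an $\mathcal{L}$-algebra $A$ forces $A[0]=k=\field$ and $\mu_{[0],[0]}=\text{id}$, and that for $\field$ itself every structure morphism $\field(\alpha)$ and every $\mu_{[n],[m]}$ equals $\text{id}_\field$ (using $\field\otimes\field=\field$ in the strict monoidal category $\mathcal{C}$).

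For terminality, given an $\mathcal{L}$-algebra $A$ I would take $f_{[n]}:=A(\iota_n):A[n]\to A[0]=\field$. Naturality of $f:A\to\field$ is the identity $A(\iota_p)\circ A(\alpha)=A(\alpha\circ\iota_p)=A(\iota_n)$ for $\alpha:[p]\to[n]$, using $\field(\alpha)=\text{id}_\field$. Compatibility with the products, i.e.\ $A(\iota_{n+m})\circ\mu_{[n],[m]}=A(\iota_n)\otimes A(\iota_m)$, is exactly the instance of the naturality square of $\mu$ in Definition \ref{df-L-algebra} at the pair $(\iota_n,\iota_m)$, whose bottom edge is $\mu_{[0],[0]}=\text{id}$ and whose right edge is $A(\iota_n+\iota_m)=A(\iota_{n+m})$. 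Finally $f_{[0]}=A(\iota_0)=A(\text{id}_{[0]})=\text{id}_\field$, so condition (2) of Proposition \ref{prop-def-L-algebras} holds. For uniqueness, any morphism $g:A\to\field$ satisfies, by naturality at $\iota_n$, the relation $\text{id}_\field\circ g_{[n]}=g_{[0]}\circ A(\iota_n)$, and $g_{[0]}=\text{id}_\field$ forces $g_{[n]}=A(\iota_n)=f_{[n]}$.

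For initiality the argument is dual, with $\pi_n$ replacing $\iota_n$: set $g_{[n]}:=A(\pi_n):\field=A[0]\to A[n]$. Naturality of $g:\field\to A$ is $A(\alpha)\circ A(\pi_n)=A(\pi_n\circ\alpha)=A(\pi_p)$ for $\alpha:[p]\to[n]$; compatibility with products, $\mu_{[n],[m]}\circ(A(\pi_n)\otimes A(\pi_m))=A(\pi_{n+m})$, is the naturality square of $\mu$ at $(\pi_n,\pi_m)$, now with $\mu_{[0],[0]}=\text{id}$ as its top edge and $A(\pi_n+\pi_m)=A(\pi_{n+m})$ as its right edge; and $g_{[0]}=A(\text{id}_{[0]})=\text{id}_\field$. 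Uniqueness follows because naturality of any morphism $h:\field\to A$ at $\pi_n$ gives $h_{[n]}\circ\text{id}_\field=A(\pi_n)\circ h_{[0]}$, and $h_{[0]}=\text{id}_\field$ forces $h_{[n]}=A(\pi_n)$.

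Having produced, for every $\mathcal{L}$-algebra $A$, a unique morphism $\field\to A$ and a unique morphism $A\to\field$ in $\mathcal{L}(\mathcal{C})$, we conclude that $\field$ is both initial and terminal, hence a zero object. I do not expect a genuine obstacle: the whole proof is forced by the universal property of the zero object $[0]\in\mathcal{L}$ together with the unit axiom, and the only point demanding a little care is the bookkeeping with the strict identifications $\field\otimes\field=\field$ and $A[0]\otimes A[0]=A[0]$ when extracting the product-compatibility squares from the naturality of $\mu$.
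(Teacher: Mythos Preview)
Your proof is correct, but it takes a different route from the paper's. The paper argues externally, using that $\field$ is already the zero object of the target category $\dgacat$: for any $\mathcal{L}$-algebra $A$ the unique $\dga$-morphisms $\field\to A[n]$ and $A[n]\to\field$ are the coaugmentation $\eta_n$ and augmentation $\epsilon_n$, and these automatically assemble into morphisms of $\mathcal{L}$-algebras because $\mu$, being a morphism of $\dga$-modules, respects (co)augmentations (this is the content of the paper's single commutative diagram). Uniqueness and naturality are then for free, since there is only one $\dga$-map into or out of $\field$.

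You instead argue internally, using that $[0]$ is the zero object of the source category $\mathcal{L}$: your components $A(\iota_n)$ and $A(\pi_n)$ are extracted from the functor $A$ itself, and naturality, product-compatibility and uniqueness all come from the functoriality of $A$ together with the naturality square for $\mu$ and the unit axiom. The two constructions of course produce the same maps, since in $\dgacat$ any morphism $A[n]\to A[0]=\field$ must be $\epsilon_n$. What your approach buys is generality: it never uses that $\mathcal{C}=\dgacat$, only the unit axiom and the strict monoidal identifications, so it shows $\field$ is a zero object in $\mathcal{L}(\mathcal{C})$ for any strict symmetric monoidal $\mathcal{C}$ with quasi-isomorphisms. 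The paper's argument is shorter precisely because it exploits the extra structure of $\dgacat$.
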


	\begin{proof}
	$\field$ is the zero object of $\dgacat$, then,
	for any $\mathcal{L}$-algebra $A$, this defines unique $\dga$-morphisms
	$i_{[n]}:\field \to A[n]$ and $p_{[n]}:A[n]\to \field$ ($n\geq 0$),
	which coincide with the coaugmentation and augmentation of $A[n]$, respectively.
	The associated natural transformations $i:\field \to A$ and $p:A\to \field$
	are morphisms of $\mathcal{L}$-algebras by commutativity of the following diagram.
		\begin{equation}
		\begin{gathered}
		\xymatrix@C=2pc{
		&\field \ar[dl]_-{\eta_n \otimes \eta_m} \ar[dr]^-{\eta_{n+m}}&\\
		A[n]\otimes A[m] \ar[rr]_-{\mu}  & & A[n+m] \\
		&\field \ar[ul]^-{\epsilon_n \otimes \epsilon_m} \ar[ur]_-{\epsilon_{n+m}}&\\
		}
		\end{gathered}
		\end{equation}
	\end{proof}

	\begin{proposition}
	\label{df-product-L-algebras}
	Let $A$ and $B$ be two $\mathcal{L}$-algebras.
	Let $P$ be the functor $P:\mathcal{L}^{\text{op}}\to \mathcal{C}$ 
	defined by,

	\begin{enumerate}
	\item $P[n]=A[n]\otimes B[n]$ for all $[n]\in \mathcal{L}$.
	\item $P(\alpha)=A(\alpha)\otimes B(\alpha)$ for all $\alpha$ morphism in $\mathcal{L}$.
	\end{enumerate}

	Let $\mu_\mathcal{P}:\otimes \circ (\mathcal{P}\times \mathcal{P})\to \mathcal{P}\circ +$ 
	be the natural transformation given by the following composition.

		\begin{equation}
		\begin{gathered}
		\xymatrix{
		P[n]\otimes P[m]\ar[r]^-{=} \ar[dd]_-{\mu_\mathcal{P}}&A[n]\otimes B[n]\otimes A[m]\otimes B[m]
		\ar[d]^-{1\otimes T\otimes 1}\\
		&A[n]\otimes A[m]\otimes B[n]\otimes B[m]
		\ar[d]^-{\mu_\mathcal{A}\otimes \mu_\mathcal{B}}\\
		P[n+m]\ar[r]^-{=}&A[n+m]\otimes B[n+m] 
		}
		\end{gathered}
		\end{equation}

	Then $\mathcal{P}$ is an $\mathcal{L}$-algebra.
	\end{proposition}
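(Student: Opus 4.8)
The plan is to verify directly that the pair $(\mathcal{P},\mu_{\mathcal{P}})$ satisfies the four axioms in Definition \ref{df-L-algebra}: associativity, commutativity, unit, and coherence. First I would dispatch the routine functoriality check: since $P[n] = A[n]\otimes B[n]$ and $P(\alpha) = A(\alpha)\otimes B(\alpha)$, functoriality of $\mathcal{P}$ is immediate from functoriality of $\mathcal{A}$ and $\mathcal{B}$ together with the interchange law in the monoidal category. Likewise, naturality of $\mu_{\mathcal{P}}$ in $([n],[m])$ reduces to naturality of $\mu_{\mathcal{A}}$ and $\mu_{\mathcal{B}}$ once one drags the twist $1\otimes T\otimes 1$ past the maps $A(\alpha)\otimes B(\alpha)$, which is exactly the naturality (bifunctoriality) of the symmetry $T$ in a symmetric monoidal category. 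I would write this as a single pasting diagram combining the defining square of $\mu_{\mathcal{P}}$ with the two naturality squares for $\mu_{\mathcal{A}}$, $\mu_{\mathcal{B}}$ and the naturality hexagon for $T$.

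Next I would check the three algebraic axioms. For \textbf{Unit}: $P[0] = A[0]\otimes B[0] = \field\otimes\field = \field$, and since $\mu_{\mathcal{A},[0],[n]}$, $\mu_{\mathcal{B},[0],[n]}$ are identities and the twist $T$ on a factor involving $\field$ is the identity, $\mu_{\mathcal{P},[0],[n]} = 1$; symmetrically on the other side. For \textbf{Coherence}: $\mu_{\mathcal{P},[n],[m]}$ is built as a composite of $1\otimes T\otimes 1$ (an isomorphism, hence a quasi-isomorphism) with $\mu_{\mathcal{A}}\otimes\mu_{\mathcal{B}}$; the latter is a quasi-isomorphism because $\mu_{\mathcal{A}}$ and $\mu_{\mathcal{B}}$ are quasi-isomorphisms of $\dga$-modules over a field and the tensor product of quasi-isomorphisms over a field is a quasi-isomorphism (by the Künneth theorem, no flatness obstruction since $\field$ is a field). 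A composite of quasi-isomorphisms is a quasi-isomorphism, so $\mu_{\mathcal{P}}$ is one.

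The heart of the proof, and the place where the bookkeeping is heaviest, is \textbf{Associativity} and \textbf{Commutativity}. For associativity I would expand $\mu_{\mathcal{P}}\circ(\mu_{\mathcal{P}}\otimes 1)$ and $\mu_{\mathcal{P}}\circ(1\otimes\mu_{\mathcal{P}})$ on $P[n]\otimes P[m]\otimes P[p] = A[n]\otimes B[n]\otimes A[m]\otimes B[m]\otimes A[p]\otimes B[p]$, observe that both composites sort the six tensor factors into the order $A[n]\otimes A[m]\otimes A[p]\otimes B[n]\otimes B[m]\otimes B[p]$ by a permutation built entirely from instances of $T$, and then apply $\mu_{\mathcal{A}}$-twice and $\mu_{\mathcal{B}}$-twice; the two resulting permutations are equal by coherence of the symmetry (Mac Lane's coherence theorem for symmetric monoidal categories), and then associativity of $\mu_{\mathcal{A}}$ and of $\mu_{\mathcal{B}}$ finishes it. Commutativity is the analogous computation with three tensor factors $A[n]\otimes B[n]\otimes A[m]\otimes B[m]$: one checks that $\mu_{\mathcal{P},[m],[n]}\circ T_{P[n],P[m]}$ and $\mathcal{P}(\tau)\circ\mu_{\mathcal{P},[n],[m]}$ agree, by pushing the big twist through and invoking the commutativity squares for $\mathcal{A}$ and $\mathcal{B}$ (with the same $\tau:[m+n]\to[n+m]$, since $\mathcal{P}(\tau) = A(\tau)\otimes B(\tau)$) plus coherence of $T$ to match the sign/permutation bookkeeping. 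The main obstacle is purely organizational: keeping track of the Koszul signs produced by the various transpositions $T$, which is exactly what coherence for symmetric monoidal categories guarantees will be consistent, so no genuine difficulty arises — it is a diagram chase.
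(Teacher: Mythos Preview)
Your proposal is correct and follows essentially the same route as the paper: a direct verification of unit, coherence, commutativity, and associativity, with the latter two reduced to the associativity and commutativity of $\mu_{\mathcal{A}}$ and $\mu_{\mathcal{B}}$ after rearranging tensor factors via the symmetry $T$. The only cosmetic differences are that the paper omits the functoriality and naturality checks you include, and it carries out the associativity and commutativity verifications by explicit computation and an explicit diagram rather than by appeal to Mac Lane's coherence theorem; the content is the same.
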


	\begin{proof}
	Clearly $\mu_{\mathcal{P}}$ satisfy the unit axiom. Commutativity follows from the commutative
	diagram,

	\begin{equation}
	\begin{gathered}
	\xymatrix{
	A[n]\otimes B[n]\otimes A[m]\otimes B[m] \ar[d]_{1\otimes T\otimes 1} \ar[r]^-{T^\sigma}
	& A[m]\otimes B[m]\otimes A[n]\otimes B[n] \ar[d]^-{1\otimes T\otimes 1}\\
	A[n]\otimes A[m]\otimes B[n] \otimes B[m] \ar[r]^-{T\otimes T} \ar@<-1ex>@{<-}[r]_-{T\otimes T}
	\ar[d]_-{\mu_{\mathcal{A}}\otimes \mu_{\mathcal{B}}}
	&  A[m]\otimes A[n]\otimes B[m]\otimes B[n] \ar[d]^-{\mu_{\mathcal{A}}\otimes \mu_{\mathcal{B}}}\\
	A[n+m]\otimes B[n+m] \ar[r]^-{\tau \otimes \tau}& A[n+m]\otimes B[n+m]
	}
	\end{gathered}
	\end{equation}

	where $\sigma=\begin{smatrix}1&2&3&4\\ 3&4&1&2\end{smatrix}$. 
	Upper square is commutative by direct evaluation, 
	and bottom square by commutativity of $\mu_\mathcal{A}$ and $\mu_{\mathcal{B}}$.
	Associativity of $\mu_\mathcal{P}$ can be verified directly,

	\begin{align*}
	\mu_\mathcal{P}(1\otimes \mu_{\mathcal{P}})
	&= (\mu_\mathcal{A}\otimes \mu_\mathcal{B})(1\otimes T\otimes 1)((\mu_\mathcal{A}\otimes \mu_\mathcal{B})(1\otimes T\otimes 1)\otimes 1\otimes 1)\\
	&= (\mu_\mathcal{A}\otimes \mu_\mathcal{B})(1\otimes T\otimes 1)
	(1\otimes 1\otimes (\mu_\mathcal{A}\otimes \mu_{\mathcal{B}})(1\otimes T\otimes 1))\\
	&=\mu_\mathcal{P}(1\otimes \mu_\mathcal{P})
	\end{align*}

	Finally, $\mu_\mathcal{P}$ satisfies the coherence condition 
	because tensor product of two quasi-isomorphisms is again a 
	quasi-isomorphism when $\field$ is a field.
	\end{proof}

	\begin{definition}
	\label{df-tensor-L}
	$\mathcal{L}$-algebra $\mathcal{P}$ defined in \ref{df-product-L-algebras} is
	called the tensor product of $\mathcal{A}$ and $\mathcal{B}$ and
	is denoted $\mathcal{A}\otimes \mathcal{B}$.
	\end{definition}

	\begin{proposition}
	Tensor product of $\mathcal{L}$-algebras extends to
	a functor $\otimes:\mathcal{L}(C)\times \mathcal{L}(C) \to \mathcal{L}(C)$.
	\end{proposition}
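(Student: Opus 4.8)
The plan is to promote the object-level tensor product already constructed in Proposition \ref{df-product-L-algebras} and Definition \ref{df-tensor-L} to a bifunctor by specifying its action on morphisms and checking the functor axioms, all of which reduce to pointwise statements in the symmetric monoidal category $\mathcal{C}$. Concretely, given morphisms of $\mathcal{L}$-algebras $f:\mathcal{A}\to\mathcal{A}'$ and $g:\mathcal{B}\to\mathcal{B}'$, I would define $f\otimes g:\mathcal{A}\otimes\mathcal{B}\to\mathcal{A}'\otimes\mathcal{B}'$ to be the natural transformation whose component at $[n]$ is $f_{[n]}\otimes g_{[n]}:A[n]\otimes B[n]\to A'[n]\otimes B'[n]$, using the tensor functor of $\mathcal{C}$ on morphisms.

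First I would check that $f\otimes g$ is a well-defined natural transformation of the underlying functors $\mathcal{L}^{\mathrm{op}}\to\mathcal{C}$: for each arrow $\alpha$ in $\mathcal{L}$ the naturality square for $f\otimes g$ is the tensor product (in $\mathcal{C}$) of the naturality square for $f$ with that for $g$, hence commutes because $\otimes$ in $\mathcal{C}$ is a bifunctor. Second, I would verify that $f\otimes g$ is a morphism of $\mathcal{L}$-algebras in the sense of Proposition \ref{prop-def-L-algebras}: condition (2) is immediate since at $[0]$ the component is $1_\field\otimes 1_\field=1_\field$; for condition (1) one pastes the definition of $\mu_{\mathcal{A}\otimes\mathcal{B}}$ (the composite through $1\otimes T\otimes 1$ followed by $\mu_\mathcal{A}\otimes\mu_\mathcal{B}$) against the corresponding composite for $\mathcal{A}'\otimes\mathcal{B}'$, and the required square decomposes into the naturality of the symmetry $T$ of $\mathcal{C}$ with respect to $f_{[n]},g_{[n]},f_{[m]},g_{[m]}$ together with two copies of condition (1) for $f$ and for $g$. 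Finally, functoriality itself — $1_{\mathcal{A}}\otimes 1_{\mathcal{B}}=1_{\mathcal{A}\otimes\mathcal{B}}$ and $(f'\otimes g')\circ(f\otimes g)=(f'\circ f)\otimes(g'\circ g)$ — follows componentwise from the fact that $\otimes:\mathcal{C}\times\mathcal{C}\to\mathcal{C}$ is a functor, plus the interchange law in $\mathcal{C}$; and compatibility of composition of natural transformations with componentwise composition is automatic.

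None of these steps presents a real obstacle; the only point requiring a little care is the compatibility with the products $\mu$, i.e. condition (1), because one must track the shuffle $1\otimes T\otimes 1$ correctly and invoke naturality of the symmetric structure of $\mathcal{C}$ rather than just bifunctoriality of $\otimes$. I would display that one diagram explicitly — an outer rectangle whose vertical edges are $\mu_{\mathcal{A}\otimes\mathcal{B}}$ and $\mu_{\mathcal{A}'\otimes\mathcal{B}'}$ and whose horizontal edges are $(f\otimes g)_{[n]}\otimes(f\otimes g)_{[m]}$ and $(f\otimes g)_{[n+m]}$ — subdivided into the naturality square for $T$ on top and the tensor of the two squares from condition (1) for $f$ and $g$ on the bottom, and remark that the rest is routine.
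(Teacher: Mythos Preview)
Your proposal is correct and follows essentially the same route as the paper: define $(f\otimes g)_{[n]}=f_{[n]}\otimes g_{[n]}$, check naturality pointwise, and verify compatibility with $\mu$ by decomposing through the symmetry $1\otimes T\otimes 1$ and invoking condition~(1) for $f$ and $g$ separately. If anything you are slightly more thorough, since the paper omits the explicit verification of condition~(2) and of the functor axioms (identities and composition), which you rightly note are immediate from bifunctoriality of $\otimes$ in $\mathcal{C}$.
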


	\begin{proof}
	We need to check that for every pair of morphisms of $\mathcal{L}$-algebras,
	$f:\mathcal{A}\to \mathcal{B}$, $g:\mathcal{C}\to \mathcal{D}$, there
	is a morphism of $\mathcal{L}$-algebras $f\otimes g:\mathcal{A}\otimes \mathcal{C}\to \mathcal{B}\otimes \mathcal{D}$.

	Define $f\otimes g$ as $(f\otimes g)_n=f_n\otimes g_n:A[n]\otimes C[n]\to B[n]\otimes D[n]$.
	Let $\alpha:[m]\to [n]$ morphism of $\mathcal{L}$, and consider the following diagram.

	\begin{equation}
	\begin{gathered}
	\xymatrix@C=4pc{
	A[n]\otimes C[n]\ar[r]^-{(f\otimes g)_n} 
	\ar[d]_-{\alpha \otimes \alpha}& B[n]\otimes D[n] \ar[d]^-{\beta \otimes \beta}\\
	A[m]\otimes C[m]\ar[r]^-{(f\otimes g)_n} & B[m]\otimes D[m]
	}
	\end{gathered}
	\end{equation}

	This diagram is commutative because,

	\begin{equation}
	\begin{gathered}
	\begin{array}{lcl}
	(f\otimes g)_m \circ (\alpha \otimes \alpha) &=& (f_m \otimes g_m)\circ(\alpha \otimes \alpha)\\
	&=& (f_m \circ \alpha)\otimes(g_m\otimes \alpha)\\
	&=& (\beta \circ f_n)\otimes (\beta\circ g_n) \text{ ($f$ and $g$ are morphism of $\mathcal{L}$-algebras)}\\ 
	&=& (\beta \otimes \beta)\circ (f\otimes g)_n\\
	\end{array}
	\end{gathered}
	\end{equation}

	$f\otimes g$ preserves the quasi-isomorphism $\mu$ by the following diagram.

	\begin{equation}
	\begin{gathered}
	\xymatrix@C=6pc{
	(\mathcal{A}\otimes \mathcal{C})(n)\otimes (\mathcal{A}\otimes \mathcal{C})(m)
	\ar[r]^-{(f\otimes g)_n\otimes (f\otimes g)_m} \ar[d]^-{\mu_{\mathcal{A}\otimes \mathcal{C}}}&
	(\mathcal{B}\otimes \mathcal{D})(n)\otimes (\mathcal{B}\otimes \mathcal{D})(m)
	\ar[d]^-{\mu_{\mathcal{B}\otimes \mathcal{D}}}\\
	(\mathcal{A}\otimes \mathcal{C})(n+m) \ar[r]^-{(f\otimes g)_{n+m}} &
	(\mathcal{B}\otimes \mathcal{D})(n+m)
	}
	\end{gathered}
	\end{equation}

	Commutativity follows because,

	\begin{equation}
	\begin{gathered}
	\begin{array}{lcl}
	(f\otimes g)_{n+m}\circ \mu_{\mathcal{A}\otimes \mathcal{C}}&=&
	(f_{n+m}\otimes g_{n+m})(\mu_\mathcal{A}\otimes \mu_\mathcal{C})(1\otimes T\otimes 1)\\
	&=& (f_{n+m}\mu_\mathcal{A}\otimes g_{n+m}\mu_{\mathcal{C}})(1\otimes T\otimes 1)\\
	&=& (\mu_\mathcal{B}(f_n\otimes f_m)\otimes \mu_\mathcal{D}(g_n\otimes g_m))(1\otimes T\otimes 1)\\
	&=& (\mu_\mathcal{B}\otimes \mu_\mathcal{D})(f_n\otimes f_m\otimes g_n\otimes g_m)(1\otimes T\otimes 1)\\
	&=& (\mu_\mathcal{B}\otimes \mu_\mathcal{D})(1\otimes T\otimes 1)(f_n\otimes g_n\otimes f_m\otimes g_m)\\
	&=& \mu_{\mathcal{B}\otimes \mathcal{D}}\circ ((f\otimes g)_{n}\otimes (f\otimes g)_m)
	\end{array}
	\end{gathered}
	\end{equation}

	\end{proof}

	Next proposition proof is straightforward.

	\begin{proposition}
	\label{prop-L(C)-strict-monoidal}
	The category $\mathcal{L}(\mathcal{C})$ is a strict symmetric monoidal category with unit,
	where product is given by the tensor product of $\mathcal{L}$ 
	and unit is the trivial $\mathcal{L}$-algebra $\field$.	
	\end{proposition}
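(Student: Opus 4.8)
The plan is to verify the three structural pieces of a strict symmetric monoidal category in turn: strict associativity of $\otimes$, strict left and right unitality with respect to the trivial $\mathcal{L}$-algebra $\field$, and the symmetry isomorphism together with its coherence (hexagon and $T\circ T = 1$). Everything will be checked levelwise, i.e.\ by evaluating the functors at each object $[n]\in\mathcal{L}$ and using that the underlying category $\mathcal{C}=\dgacat$ is already strict symmetric monoidal. So the proof is genuinely "straightforward" in that it reduces to a handful of diagram checks, but I will spell out which checks are needed.

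First I would treat associativity. For $\mathcal{L}$-algebras $\mathcal{A},\mathcal{B},\mathcal{D}$, both $(\mathcal{A}\otimes\mathcal{B})\otimes\mathcal{D}$ and $\mathcal{A}\otimes(\mathcal{B}\otimes\mathcal{D})$ have underlying functor $[n]\mapsto A[n]\otimes B[n]\otimes D[n]$ on the nose, since $\otimes$ in $\mathcal{C}$ is strictly associative and the product functor of Proposition \ref{df-product-L-algebras} is defined objectwise. The only thing to check is that the two structural transformations $\mu$ agree. Unwinding the definition, each is a composite of permutations $1\otimes T\otimes 1$ (regrouping the six tensor factors $A[n]\otimes B[n]\otimes D[n]\otimes A[m]\otimes B[m]\otimes D[m]$ into $A[n]\otimes A[m]\otimes B[n]\otimes B[m]\otimes D[n]\otimes D[m]$) followed by $\mu_\mathcal{A}\otimes\mu_\mathcal{B}\otimes\mu_\mathcal{D}$; the two regrouping permutations coincide by the coherence theorem for symmetric monoidal categories (or by a direct permutation computation). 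Next, unitality: for the trivial algebra $\field$ one has $\field[n]=\field$ and all its structure maps are identities, so $(\field\otimes\mathcal{A})[n]=\field\otimes A[n]=A[n]$ strictly in $\dgacat$, and its $\mu_{\field\otimes\mathcal{A}}$ reduces to $\mu_\mathcal{A}$ after cancelling the identity factors; symmetrically on the right. Finally I would exhibit the symmetry: define $\sigma_{\mathcal{A},\mathcal{B}}:\mathcal{A}\otimes\mathcal{B}\to\mathcal{B}\otimes\mathcal{A}$ levelwise by the symmetry $T\colon A[n]\otimes B[n]\to B[n]\otimes A[n]$ of $\mathcal{C}$; naturality in $[n]$ (compatibility with the maps $A(\alpha)\otimes B(\alpha)$) is naturality of $T$ in $\mathcal{C}$, and compatibility with the structural transformations $\mu$ is exactly the kind of permutation identity that was already used in the commutativity part of the proof of Proposition \ref{df-product-L-algebras}. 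That $\sigma$ is an isomorphism of $\mathcal{L}$-algebras with $\sigma\circ\sigma=1$, and that the hexagon axioms hold, follows from the corresponding facts for $T$ in $\dgacat$, again checked one level at a time.

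The main obstacle — really the only place where a little care is needed — is bookkeeping the symmetric-group elements that shuffle the tensor factors, making sure that the Koszul signs introduced by $T$ in the graded setting match up on both sides of associativity and of the hexagons. Since all the relevant permutations are built from the single transposition underlying $T$, the coherence theorem for symmetric monoidal categories guarantees they agree, so no genuine obstruction arises; I would simply remark that every diagram in sight commutes by coherence in $\mathcal{C}$ together with the definitions, which is why the proposition was flagged as having a straightforward proof.
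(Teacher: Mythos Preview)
Your proposal is correct and is exactly the kind of levelwise verification the paper has in mind; in fact the paper omits the proof entirely, simply declaring it ``straightforward'' and writing \qed. Your expanded outline (associativity, unitality, symmetry, coherence, all inherited from the strict symmetric monoidal structure of $\dgacat$ after checking compatibility with $\mu$) is precisely what that word is standing in for.
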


	\qed



	\begin{definition}
	\label{df-main-element-l-algebra}
	Let $\mathcal{A}$ be an $\mathcal{L}$-algebra. The module $A[1]$
	is called the main element of $\mathcal{A}$. 
	The associated forgetful functor from $\mathcal{L}(\mathcal{C})$ to $\mathcal{C}$
	is denoted by $U$. In fact we have a collection indexed by $n\geq 0$ of forgetful functors
	$U_n:\mathcal{L}(\mathcal{C})\to \mathcal{C}$, with $U_n(\mathcal{A})=A[n]$.
	\end{definition}

	\begin{definition}
	\label{homology-L-algebras}
	Let $A$ be an $\mathcal{L}$-algebra. The homology of $A$
	is defined to be the homology of its main element.
	\end{definition}

	\begin{remark}
	The homology of $\mathcal{L}$-algebras is equal to the composition of functors $H_*\circ U$,
	where $H_*$ is the homology functor in $\mathcal{C}$. 
	\end{remark}

	\begin{definition}
	\label{quasi-iso-L-algebras}
	Let $f:\mathcal{A}\to \mathcal{B}$ be a morphism of $\mathcal{L}$-algebras
	with values in $\mathcal{C}$.
	The morphism $f$ is called quasi-isomorphism if the induced morphism
	$U(f)$ in $\mathcal{C}$ by the forgetful functor, is a quasi-isomorphism.
	\end{definition}

	\begin{proposition}
	Let $f:\mathcal{A}\to \mathcal{B}$ be a morphism of $\mathcal{L}$-algebras
	with values in $\mathcal{C}$. If $U_k(f)$ is a quasi-isomorphism in $\mathcal{C}$ for some $k$,
	then $U_{kn}(f)$ is a quasi-isomorphism for every $n\geq 0$.
	In particular if $f$ is a quasi-isomorphism then $U_n(f)$
	is a quasi-isomorphism for every $n\geq 1$.
	\end{proposition}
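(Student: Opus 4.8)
The plan is to reduce the statement to the coherence axiom in Definition~\ref{df-L-algebra} together with the fact that, $\field$ being a field, a tensor product of quasi-isomorphisms is again a quasi-isomorphism (as already used in the proof of Proposition~\ref{df-product-L-algebras}). First I would dispose of the case $n=0$: by the unit axiom $A[0]=B[0]=\field$ and $f_{[0]}=1_{\field}$ is trivially a quasi-isomorphism. So from now on fix $n\geq 1$ and assume $U_k(f)=f_{[k]}$ is a quasi-isomorphism.

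For any $\mathcal{L}$-algebra $\mathcal{A}$ I would introduce the iterated product $\mu^{n}_{\mathcal{A}}\colon A[k]^{\otimes n}\to A[kn]$, defined recursively by $\mu^{1}_{\mathcal{A}}=1_{A[k]}$ and $\mu^{j+1}_{\mathcal{A}}=\mu_{[jk],[k]}\circ(\mu^{j}_{\mathcal{A}}\otimes 1_{A[k]})$; the associativity axiom makes this independent of the bracketing. Each factor $\mu_{[jk],[k]}$ is a quasi-isomorphism by the coherence axiom, and tensoring it with $1_{A[k]}$ preserves this property because $\field$ is a field; hence the finite composite $\mu^{n}_{\mathcal{A}}$ is a quasi-isomorphism, and so is $\mu^{n}_{\mathcal{B}}$. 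Iterating the square of Proposition~\ref{prop-def-L-algebras} that expresses preservation of the product by $f$, one obtains the commutative diagram
\begin{equation*}
\begin{gathered}
\xymatrix@C=4pc{
A[k]^{\otimes n}\ar[r]^-{\mu^{n}_{\mathcal{A}}}\ar[d]_-{f_{[k]}^{\otimes n}} & A[kn]\ar[d]^-{f_{[kn]}}\\
B[k]^{\otimes n}\ar[r]_-{\mu^{n}_{\mathcal{B}}} & B[kn]
}
\end{gathered}
\end{equation*}

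Now $f_{[k]}^{\otimes n}$ is a quasi-isomorphism, being the $n$-fold tensor power over $\field$ of the quasi-isomorphism $f_{[k]}$. Hence $f_{[kn]}\circ\mu^{n}_{\mathcal{A}}=\mu^{n}_{\mathcal{B}}\circ f_{[k]}^{\otimes n}$ is a composite of quasi-isomorphisms, and since $\mu^{n}_{\mathcal{A}}$ is a quasi-isomorphism, a two-out-of-three argument forces $U_{kn}(f)=f_{[kn]}$ to be a quasi-isomorphism. For the final assertion, if $f$ is a quasi-isomorphism then $U(f)=U_1(f)$ is one by definition, and applying the first part with $k=1$ gives that $U_n(f)$ is a quasi-isomorphism for every $n\geq 0$, hence for every $n\geq 1$. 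The only point that needs care is the claim that the iterated products $\mu^{n}$ are quasi-isomorphisms; this is exactly where the coherence axiom and the assumption that $\field$ is a field come in, via the stability of quasi-isomorphisms under tensoring with identity morphisms.
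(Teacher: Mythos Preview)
Your argument is correct and follows essentially the same approach as the paper: both rely on the coherence axiom (each $\mu$ is a quasi-isomorphism), the fact that over a field a tensor product of quasi-isomorphisms is again a quasi-isomorphism, the naturality square coming from $f$ preserving the product, and a two-out-of-three argument. The only cosmetic difference is packaging: the paper proceeds by induction on $n$, using the single square
\[
\xymatrix@C=5pc{
A[k]\otimes A[k(n-1)] \ar[r]^-{f_k\otimes f_{k(n-1)}} \ar[d]_-{\mu_\mathcal{A}} & B[k]\otimes B[k(n-1)] \ar[d]^-{\mu_\mathcal{B}}\\
A[kn]\ar[r]^-{f_{kn}} & B[kn]
}
\]
whereas you unfold the induction into an iterated product $\mu^{n}$ and a single square with $f_{[k]}^{\otimes n}$; this is the same proof rearranged.
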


	\begin{proof}
	We proceed by induction. The hypothesis says that $f_k:A[k]\to B[k]$
	is a quasi-isomorphism. Now, the following diagram is commutative
	because $f$ is a morphism of $\mathcal{L}$-algebras

	\begin{equation}
	\begin{gathered}
	\xymatrix@C=6pc{
	A[k]\otimes A[k(n-1)] \ar[r]^-{f_k\otimes f_{k(n-1)}} \ar[d]_-{\mu_\mathcal{A}} 
	& B[k]\otimes B[k(n-1)] \ar[d]^-{\mu_\mathcal{B}}\\
	A[kn]\ar[r]^-{f_{kn}} & B[kn]
	}
	\end{gathered}
	\end{equation}

	The tensor product $f_k\otimes f_{k(n-1)}$ is a quasi-isomorphism since
	$\field$ is a field. Then $f_{kn}$ is a quasi-isomorphism.

	\end{proof}

	\begin{definition}
	\label{df-quasi-isomor-relation}
	The equivalence relation on $\mathcal{L}$-algebras spanned by quasi-isomorphisms
	will be called again quasi-isomorphism.
	\end{definition}


	The concept of $\mathcal{L}$ is inspired by the fact that 
	any natural diagonal of chain complexes $C_*(X)\to C_*(X)\otimes C_*(X)$, 
	is determined by a zig-zag of natural morphisms 
	$C_*(X)\to C_*(X\times X) \leftarrow C_*(X)\otimes C_*(X)$,
	where the first arrow is the morphism induced 
	by the simplicial diagonal $X\to X\times X$
	and the second arrow is the Eilenberg-Mac~Lane transformation. 
	In this section we will proceed to describe the $\mathcal{L}$-algebra structure 
	on the chain complexes that have as product the Eilenberg-Mac~Lane transformation,
	using a canonical way to associate 
	to each simplicial set (not necessarily $0$-reduced)
	an $\mathcal{L}$-algebra whose main element is its chain complex.

	\begin{proposition}
	\index{pointedsimplicialset@Pointed simplicial set}
	Let $\sset_*$ be the category of pointed simplicial set. Then for every
	simplicial set $X$, there is an unique
	functor $S_X:\mathcal{L}^{\text{op}}\to \sset_*$ preserving zeros,
	mapping sums to products, and $[1]$ to $X$.
	\end{proposition}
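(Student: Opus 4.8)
The plan is to deduce the statement from the universal property recorded just after Proposition~\ref{free-cocartesian-category}: $\mathcal{L}^{\text{op}}$ is the free strictly associative pointed cartesian category generated by $[1]$. Granting this, it suffices to exhibit $\sset_*$ as a strictly associative pointed cartesian category and to nominate $X$ as the image of $[1]$; the functor $S_X$, together with its uniqueness, is then produced formally, exactly as in the proof of Proposition~\ref{free-cocartesian-category} applied to $\mathcal{L}^{\text{op}}$. In particular the universal property forces $S_X[0]$ to be the zero object, $S_X[n]$ to be the $n$-fold product $X\times\cdots\times X$, and the five generating arrows $1,d_1,\zeta_1,s_1,\tau$ of $\mathcal{L}$ to be sent, read in the opposite direction, to $1_X$, to the unique arrow $X\to *$, to the unique arrow $*\to X$, to the diagonal $X\to X\times X$, and to the canonical twist of $X\times X$; uniqueness is precisely the observation that no other values on these generators are possible.

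First I would check that $\sset_*$ is a pointed cartesian category. The one-point simplicial set $*$ is terminal in $\sset$, and in the coslice category $\sset_*=\sset_{*/}$ it becomes both initial and terminal, hence a zero object; this is $S_X[0]$. Finite products in $\sset_*$ are created by the forgetful functor to $\sset$, so the product of two pointed simplicial sets is their levelwise cartesian product carrying the componentwise basepoint, and such a product is canonically pointed; thus $S_X[n]=X^{\times n}$, and $S_X$ sends a partial map $[n]\to[m]$ of $\mathcal{L}$ to the induced map between the corresponding powers of $X$ (undefined coordinates being sent to the basepoint).

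The one genuine obstacle is strict associativity: the cartesian product of sets, and hence of simplicial sets, is associative only up to canonical isomorphism, whereas Proposition~\ref{free-cocartesian-category} is stated for a \emph{strictly} associative structure. This is handled exactly as $\mathcal{L}$ itself is made strictly associative (with $[n]+[m]:=[n+m]$): fix once and for all the model in which $X^{\times n}$ is the simplicial set whose $k$-simplices are the functions $\{1,\dots,n\}\to X_k$, with faces and degeneracies applied coordinatewise, and use the concatenation identification $\{1,\dots,n+m\}=\{1,\dots,n\}\sqcup\{1,\dots,m\}$ to obtain $X^{\times n}\times X^{\times m}=X^{\times(n+m)}$ on the nose, with $X^{\times 0}=*$ a strict unit. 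With this choice $\sset_*$ is a strictly associative pointed cartesian category, the dual of Proposition~\ref{free-cocartesian-category} applies verbatim, and the functor it produces is the desired $S_X$: it preserves the zero object, it sends the sum $[n]+[m]=[n+m]$ of $\mathcal{L}^{\text{op}}$ to the product $X^{\times n}\times X^{\times m}=X^{\times(n+m)}$, it has $S_X[1]=X$, and it is the unique functor with these properties.
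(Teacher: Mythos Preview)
Your proof is correct and follows essentially the same approach as the paper: the paper's proof consists of the single sentence ``$\sset_*$ is a strict pointed cartesian category, then by proposition \ref{free-cocartesian-category} the result follows.'' You have simply unpacked this, verifying the pointed cartesian structure on $\sset_*$ and, in particular, addressing the strict associativity issue that the paper leaves implicit.
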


	\begin{proof}
	$\sset_*$ is a strict pointed cartesian category, then by proposition \ref{free-cocartesian-category}
	the result follows.
	\end{proof}

	\begin{proposition}
	\label{prop-L-alg-canonique}
	Let $X$ be a pointed simplicial set. Then composition,

	\begin{equation}
	\begin{gathered}
	C_*\circ S_X:\mathcal{L}^{\text{op}}\to \mathcal{C}
	\end{gathered}
	\end{equation}

	together with the Eilenberg-Mac~Lane transformation is an $\mathcal{L}$-algebra.
	\end{proposition}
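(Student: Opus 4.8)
The plan is to verify the four axioms in Definition~\ref{df-L-algebra} for the functor $A := C_* \circ S_X : \mathcal{L}^{\mathrm{op}} \to \mathcal{C}$ equipped with the natural transformation $\mu$ whose component at $([n],[m])$ is the Eilenberg--Mac~Lane transformation
\[
\nabla : C_*(S_X[n]) \otimes C_*(S_X[m]) \longrightarrow C_*(S_X[n] \times S_X[m]) = C_*(S_X[n+m]),
\]
using that $S_X$ sends the coproduct $[n]+[m] = [n+m]$ in $\mathcal{L}$ to the product $S_X[n] \times S_X[m]$ in $\sset_*$ (Proposition preceding this one). First I would check naturality of $\mu$: given $\alpha : [p] \to [n]$ and $\beta : [q] \to [m]$ in $\mathcal{L}$, the required square reduces to naturality of the Eilenberg--Mac~Lane transformation with respect to the simplicial maps $S_X(\alpha) : S_X[n] \to S_X[p]$ and $S_X(\beta) : S_X[m] \to S_X[q]$, together with the fact that $S_X$ turns $\alpha + \beta$ into $S_X(\alpha) \times S_X(\beta)$; this is the standard functoriality of $\nabla$ in both variables.

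Next I would dispatch the four conditions. \textbf{Unit}: $S_X[0]$ is the terminal (= zero) pointed simplicial set, so $C_*(S_X[0]) = \field$, and $\nabla$ against the trivial complex is the identity; the normalization $\mu_{[0],[n]} = \mu_{[n],[0]} = 1$ is exactly the unitality of $\nabla$. \textbf{Associativity}: $\mu \circ (\mu \otimes 1) = \mu \circ (1 \otimes \mu)$ follows from the well-known coassociativity-type identity for the Eilenberg--Mac~Lane transformation, i.e. the diagram relating $\nabla$ on a triple product $C_*(Y)\otimes C_*(Z)\otimes C_*(W) \to C_*(Y\times Z\times W)$ computed in either order, combined with strict associativity of $+$ in $\mathcal{L}$ (so $[p]+[q]+[r]$ is unambiguous and $S_X$ sends it to the strictly associative triple product). \textbf{Commutativity}: here $\tau : [m+n] \to [n+m]$ is sent by $S_X$ to the coordinate swap $S_X[m] \times S_X[n] \to S_X[n] \times S_X[m]$, and the required square is the (anti)commutativity property of $\nabla$ with respect to the symmetry isomorphism $T$ of $\mathcal{C}$ — precisely the statement that $\nabla$ is a lax symmetric monoidal structure map, where the Koszul signs on the $C_*$ side match those coming from $T$. \textbf{Coherence}: each $\mu_{[n],[m]} = \nabla$ is a quasi-isomorphism by the Eilenberg--Zilber theorem, since $S_X[n]$ and $S_X[m]$ are simplicial sets over the field $\field$.

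The main obstacle I anticipate is bookkeeping rather than conceptual depth: one must be careful that the identifications $S_X([n]+[m]) = S_X[n]\times S_X[m]$ and the strict associativity/unitality on the $\mathcal{L}$ side are compatible on the nose with the (merely lax, a priori only associative up to coherent iso) monoidal structure carried by $\nabla$, so that the axioms hold strictly and not just up to homotopy. In other words, the work is to confirm that the classical lax symmetric comonoidal (Eilenberg--Zilber) package on $C_*$ transports, along the strictly-monoidal functor $S_X$ out of the free strict pointed cocartesian category $\mathcal{L}$, into data satisfying Definition~\ref{df-L-algebra} with the stated strict equalities; the coherence (axiom 4) is immediate from Eilenberg--Zilber and the remaining three are each a single classical identity for $\nabla$, so I would state them with references to \cite{may2007} and the EM-transformation literature rather than recompute the signs.
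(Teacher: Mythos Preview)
Your proposal is correct and follows essentially the same approach as the paper: the paper's proof consists of the single sentence ``It follows easily from Eilenberg-Mac~Lane transformation properties,'' and your outline simply unpacks which classical properties of $\nabla$ (naturality, unitality, associativity, symmetry, and Eilenberg--Zilber) correspond to which axiom of Definition~\ref{df-L-algebra}. Your version is more detailed than what the paper provides, but the content and strategy are the same.
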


	\begin{proof}
	It follows easily from Eilenberg-Mac~Lane transformation properties.
	\end{proof}

	\begin{definition}
	\label{df-canonical-L-alg}
	The $\mathcal{L}$-algebra associated to the pointed simplicial set $X$
	will be called the canonical $\mathcal{L}$-algebra of $X$, and
	denoted by $\mathcal{A}_X$.
	\end{definition}

	$\mathcal{L}$-algebra $\mathcal{A}_X$ for $n\geq 1$ 
	take the values $\mathcal{A}_X[n]=C_*(X^n)$
	with product
	$\mu_{\mathcal{A}_X}$
	the Eilenberg-Mac~Lane transformation
	$\nabla_{n,m}:C_*(X^n)\otimes C_*(X^m)\to C_*(X^{n+m})$
	Let $*$ be the base point of $X$, then the image by $\mathcal{A}_X$ for
	a morphism $\alpha:[m]\to [n]$ in $\mathcal{L}$ is given by the following formula.

	\begin{equation}
	\begin{gathered}
	\xymatrix@C=6pc@R=0.2pc{
	\mathcal{A}_X[n]=C_*(X^n)\ar[r]^-{\mathcal{A}_X(\alpha)} & \mathcal{A}_X[m]=C_*(X^m)\\
	(x_1,\ldots,x_n) \ar@{{|}-{>}}[r] & (x_{\alpha(1)},\ldots,x_{\alpha(m)})\\
	}
	\end{gathered}
	\end{equation}

	Where $x_{\alpha(j)}=*$ for each $j$ not in $Dom(\alpha)$.
	Note that taking
	$X=*$ the simplicial point, $\mathcal{A}_*$ is the trivial $\mathcal{L}$-algebra $\field$.

	In the case of a simplicial group (who will be pointed by its unit), we have an extra
	structure.

	\begin{proposition}
	Let $H$ be a simplicial group, then $\mathcal{A}_H$
	is a multiplicative $\mathcal{L}$-algebra.
	\end{proposition}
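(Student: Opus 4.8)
The plan is to unwind the definition of \emph{multiplicative}: by the discussion after Proposition~\ref{prop-def-L-algebras}, an $\mathcal{L}$-algebra is multiplicative precisely when it takes values in the category $\dga$-Alg of $\dga$-algebras over $\field$ (with monoidal product the tensor product of $\dga$-algebras). So the task is to promote the functor $\mathcal{A}_H = C_*\circ S_H$ and its product $\nabla$ from Proposition~\ref{prop-L-alg-canonique} from $\dgacat$ to $\dga$-Alg. First I would recall the standard fact that the normalized chain functor $C_*$ is lax symmetric monoidal, the structure transformation being the Eilenberg--Mac~Lane (shuffle) map $\nabla_{X,Y}\colon C_*(X)\otimes C_*(Y)\to C_*(X\times Y)$ and the unit being the canonical isomorphism $\field \xrightarrow{\sim} C_*(\mathrm{pt})$. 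Consequently $C_*$ carries monoid objects to monoid objects, so a simplicial group $G$ yields a $\dga$-algebra $C_*(G)$ via the Pontryagin product $C_*(G)\otimes C_*(G)\xrightarrow{\nabla}C_*(G\times G)\xrightarrow{C_*(m)}C_*(G)$, where $m$ is the multiplication of $G$, with unit induced by the neutral element $\mathrm{pt}\to G$.

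I would then apply this levelwise. Since $S_H([n]) = H^n$ is again a simplicial group (a finite power of $H$, pointed by its unit), each $\mathcal{A}_H[n] = C_*(H^n)$ acquires a canonical $\dga$-algebra structure, and $\mathcal{A}_H[0] = \field$ is the unit $\dga$-algebra. Next I would check the structure maps: for a morphism $\alpha\colon [m]\to [n]$ in $\mathcal{L}$, the map $S_H(\alpha)\colon H^n\to H^m$ sends $(x_1,\dots,x_n)$ to $(x_{\alpha(1)},\dots,x_{\alpha(m)})$, with the unit of $H$ inserted in the coordinates $j\notin\mathrm{Dom}(\alpha)$. Each output coordinate is therefore either a projection $H^n\to H$ or the constant map at the unit, and both are homomorphisms of simplicial groups; hence $S_H(\alpha)$ is a homomorphism of simplicial groups, and by functoriality of the Pontryagin product $\mathcal{A}_H(\alpha) = C_*(S_H(\alpha))$ is a morphism of $\dga$-algebras. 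Thus $\mathcal{A}_H\colon \mathcal{L}^{\text{op}}\to \dga$-Alg is a functor.

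The step that needs the most care — and which I expect to be the main obstacle — is that the product $\mu_{[n],[m]} = \nabla_{H^n,H^m}\colon C_*(H^n)\otimes C_*(H^m)\to C_*(H^{n+m})$ is a morphism of $\dga$-algebras, where the source carries the tensor-product algebra structure (the monoidal product of $\dga$-Alg). This is exactly the general statement that, for a lax symmetric monoidal functor, the structural map at a pair of monoid objects is a morphism of monoids, from the tensor product of the two image monoids to the image of the product monoid; it follows from the associativity and symmetry coherence axioms of the lax monoidal structure, together with the identification $H^n\times H^m = H^{n+m}$ as simplicial groups. I would either cite this or spell out the one commuting square expressing compatibility of the shuffle map with the two Pontryagin products.

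Finally, the $\mathcal{L}$-algebra axioms of Definition~\ref{df-L-algebra} (associativity, commutativity, unit, coherence) for the pair $(\mathcal{A}_H,\nabla)$ follow from the corresponding identities in $\dgacat$ already established in Proposition~\ref{prop-L-alg-canonique}, because the forgetful functor $\dga\text{-Alg}\to\dgacat$ is faithful and strong monoidal — so equality of morphisms and commutativity of the relevant diagrams can be tested after forgetting, and coherence is a property of the underlying chain map. This yields that $\mathcal{A}_H$ is an $\mathcal{L}$-algebra with values in $\dga$-Alg, that is, a multiplicative $\mathcal{L}$-algebra.
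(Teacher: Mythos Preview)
Your proof is correct and follows essentially the same approach as the paper's: both rely on the Pontryagin product to make each $C_*(H^n)$ into a $\dga$-algebra and on the fact that the Eilenberg--Mac~Lane transformation is a morphism of algebras. Your version is considerably more detailed --- in particular you verify that the structure maps $\mathcal{A}_H(\alpha)$ are algebra morphisms and that the $\mathcal{L}$-algebra axioms lift along the forgetful functor --- whereas the paper's proof is a two-line sketch invoking these facts without elaboration.
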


	\begin{proof}
	For every $n\geq 0$, $H^n$ and
	$\mathcal{A}_H[n]=C_*(H^n)$ is a differential graded algebra (with
	the Pontrjagin product). Since the Eilenberg-Mac~Lane is a morphism of algebras, the functor
	$\mathcal{A}$ maps simplicial groups to multiplicative $\mathcal{L}$-algebras.
	\end{proof}




\section{Principal results}
\label{section-principal-results}

\subsection{The Operad $\mathcal{K}$}
\label{subsection-construction-operad-K}

	In this section is constructed an $E_\infty$-operad $\mathcal{K}$
	by using the inductive colimit
	of a suitable collection of operads $\{\mathcal{K}_n\}_{n\geq 2}$.

	This construction starts with a $\mathbb{S}$-module $M$ concentrated in arity $2$
	and such that $M(2)$ is suppose to be a 
	$\dga$-module $\field[\Sigma_2]$-free resolution of $\field$.
	Operad $\mathcal{K}_2$ is taken as the free operad on $M$,
 	which makes that $\mathcal{K}_2$ will have like $M$ 
	a $\field[\Sigma_2]$-free resolution of $\field$ 
	with all abstract binary operations coded by $\mathcal{K}_2$. 	

	However, in higher arities $\mathcal{K}_2$  fails to have free resolutions of $\field$. 	
	For example, there is no guarantee we can found in $\mathcal{K}_2(3)$
	appropriate homotopies linking trees of degree $0$
	with nodes any generating element $x$ of degree $0$ in $\mathcal{K}_2(2)$, 
	like the ones in the following picture.

		\begin{equation}
		\begin{gathered}
		\begin{tikzpicture}
		\foreach \h in {1}{
			\foreach \l in {1}{
			\foreach \x in {-2}{
			\foreach \y in {0}{
			\draw [blue, thick, fill] (\x,\y+\h/2) circle [radius=0.05];
			\draw [-, thick](\x-\l/2,\y+\h) to  [out=270, in=100] (\x,\y+\h/2);
			\node at (\x-\l/2,\y+\h+0.3) {$1$};
			\draw [-, thick](\x+\l/2,\y+\h) to  [out=270, in=80] (\x,\y+\h/2);
			\draw [-, thick](\x,\y) to  (\x,\y+\h/2);
			\node at (\x+0.4,\y+\h/2) {$x$} ;
			}
			}
			}
		}
		\foreach \h in {1}{
			\foreach \l in {0.8}{
			\foreach \x in {-1.5}{
			\foreach \y in {0.5}{
			\draw [blue, thick, fill] (\x,\y+\h/2) circle [radius=0.05];
			\draw [-, thick](\x-\l/2,\y+\h) to  [out=270, in=100] (\x,\y+\h/2);
			\node at (\x-\l/2,\y+\h+0.3) {$2$};
			\draw [-, thick](\x+\l/2,\y+\h) to  [out=270, in=80] (\x,\y+\h/2);
			\node at (\x+\l/2,\y+\h+0.3) {$3$};
			\node at (\x+0.4,\y+\h/2) {$x$} ;
			}
			}
			}
		}
		\node at (-0.25,0.5) {$\longleftrightarrow$} ;
		\foreach \h in {1}{
			\foreach \l in {1}{
			\foreach \x in {1.5}{
			\foreach \y in {0}{
			\draw [blue, thick, fill] (\x,\y+\h/2) circle [radius=0.05];
			\draw [-, thick](\x-\l/2,\y+\h) to  [out=270, in=100] (\x,\y+\h/2);
			\node at (\x+\l/2,\y+\h+0.3) {$3$};
			\draw [-, thick](\x+\l/2,\y+\h) to  [out=270, in=80] (\x,\y+\h/2);
			\draw [-, thick](\x,\y) to  (\x,\y+\h/2);
			\node at (\x+0.4,\y+\h/2) {$x$} ;
			}
			}
			}
		}
		\foreach \h in {1}{
			\foreach \l in {0.8}{
			\foreach \x in {1}{
			\foreach \y in {0.5}{
			\draw [blue, thick, fill] (\x,\y+\h/2) circle [radius=0.05];
			\draw [-, thick](\x-\l/2,\y+\h) to  [out=270, in=100] (\x,\y+\h/2);
			\node at (\x-\l/2,\y+\h+0.3) {$1$};
			\draw [-, thick](\x+\l/2,\y+\h) to  [out=270, in=80] (\x,\y+\h/2);
			\node at (\x+\l/2,\y+\h+0.3) {$2$};
			\node at (\x-0.4,\y+\h/2) {$x$} ;
			}
			}
			}
		}
		\end{tikzpicture}
		\end{gathered}
		\end{equation}

	Next step extend operad $\mathcal{K}_2$		
	into an operad $\mathcal{K}_3$ having on component $\mathcal{K}_2(3)$
	all missing homotopies by using  
	the polynomial operads technique 
	discussed in subsection \ref{subsection-polynomial-operads}.
	
	This process is repeated with each component
	in order to include all possible homotopies,
	creating a nested collection of operad
	which inductive limit will have all required homotopies
	to form an $E_\infty$-operad, the operad $\mathcal{K}$.

	\begin{definition}
	\label{df-acyclic-extension}
		Let $M$ be a $\field[\Sigma_n]$-free finitely generated $\dga$-module.
		The acyclic extension of $M$ is the associated acyclic $\dga$-module 
		given by proposition \ref{prop-acyclic-complete}. 
		It will be denoted by $X(M)$.
	\end{definition}
	
	\begin{definition}
	\label{df-collection-Kn}
		Let $M$ be a $\dga$-module $\field[\Sigma_2]$-free resolution of $\field$.
		For $n\geq 2$, $\mathcal{K}_n$ is the operad defined by induction as follows.

		\begin{enumerate}
		\item $\mathcal{K}_2=F(M)$, 
		the free operad on $M$, with $M$ seen as $\mathbb{S}$-module concentrated in arity $2$.
		\item $\mathcal{K}_{n+1}=\mathcal{K}_n[M_n]$, 
		the polynomial operad on $M_n$ with coefficients in $\mathcal{K}_n$,
		defined in proposition \ref{th-adj-poly-operads},
		and $M_n$ is the $\mathbb{S}$-module given by:
			\begin{equation}
			\begin{gathered}
				M_n(i) =
				  \begin{cases}
				    \mathcal{K}_n(i)       & \quad i\neq n+1\\
				    X(\mathcal{K}_n(n+1))  & \quad i=n+1\\
				  \end{cases}
			\end{gathered}
			\end{equation}
		\end{enumerate}
	\end{definition}

		$K_2=F(M)$ is only formed from compositions 
		of arity $2$ operations in $M$,
		and is not acyclic for arities $\geq 3$. 
		$K_2(3)$ is extended to an acyclic $\dga$-module $X(K_2(3))$,
		Inclusion $K_2 \hookrightarrow M_2$,
		which is strict only in arity $3$, 
		increase $K_2$ with new operations in arity $3$ 
		not decomposable in terms of operations of arity $2$.
		Now, operad $K_3=K_2[M_2]$ is formed
		by compositions all operations in $K_2$ and the new arity $3$ operations,
		with $K_3$ matching $K_2$ in arity $2$, and in general, 
		the extension $K_n\to K_{n+1}$ is the identity for arities $\leq n$.

		Formally, canonical inclusions
		$\mathcal{K}_n\hookrightarrow \mathcal{K}_{n+1}$
		are given by arrows $\alpha_n$
		from the construction of each $\mathcal{K}_{n+1}$.

			\begin{equation}
			\begin{gathered}
				\xymatrix{
				FU(\mathcal{K}_n) \ar[r]^-{\epsilon_{\mathcal{K}_n}} \ar[d]_-{F(i_{\mathcal{K}_n})} 
				& \mathcal{K}_n \\
				F(M_n) 
				& \mathcal{K}_n[M_n]=\mathcal{K}_{n+1} \ar@{<-}[u]_-{\alpha_n} \ar@{<-}[l]^-{\beta_n} \\
				}
			\end{gathered}
			\end{equation}

		But
		$\alpha_n:\mathcal{K}_n\to \mathcal{K}_{n+1}$
		behaves like the identity for arities $\leq n$
		in the sense of lemma \ref{lemma-alpha-iso},
		allowing the following definition.
	

	\begin{definition}
	\label{df-operad-K}
		The operad $\mathcal{K}$ is defined to be 
		the inductive limit of the collection of operads,
			\begin{equation}
			\begin{gathered}
				\xymatrix{
				\mathcal{K}_2 \ar@{^{(}->}[r]^-{\alpha_2} 
				&\mathcal{K}_3 \ar@{^{(}->}[r]^-{\alpha_3} &\cdots
				\ar@{^{(}->}[r]^-{\alpha_{n-1}} 
				&\mathcal{K}_n \ar@{^{(}->}[r]^-{\alpha_n} & \cdots
				}
			\end{gathered}
			\end{equation}
	\end{definition}

	\begin{proposition}
		The operad $\mathcal{K}$ is an $E_\infty$-operad.
	\end{proposition}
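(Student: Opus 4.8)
The plan is to verify the defining property of an $E_\infty$-operad (Definition \ref{df-Einf-Operad}) arity by arity: I must show that for each $n\geq 1$ the component $\mathcal{K}(n)$ is a $\field[\Sigma_n]$-free resolution of $\field$, i.e.\ it is $\field[\Sigma_n]$-free, acyclic (augmented quasi-isomorphic to $\field$), and this data assembles operadically --- but the latter is automatic since $\mathcal{K}$ is by construction an operad. The key observation driving everything is the stabilization phenomenon recorded in Lemma \ref{lemma-alpha-iso}: since the $\mathbb{S}$-module inclusion $i_{\mathcal{K}_n}\colon U(\mathcal{K}_n)\to M_n$ is the identity in all arities $\neq n+1$, it is in particular an $n$-isomorphism, hence by Lemma \ref{lemma-alpha-iso} the structural map $\alpha_n\colon\mathcal{K}_n\to\mathcal{K}_{n+1}$ is an $n$-isomorphism. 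Therefore, for fixed arity $k$, the tower $\mathcal{K}_2(k)\to\mathcal{K}_3(k)\to\cdots$ is eventually constant: for all $n\geq k-1$ the map $\alpha_n(k)$ is an isomorphism, so $\mathcal{K}(k)=\varinjlim_n\mathcal{K}_n(k)\cong\mathcal{K}_{k-1}(k)$ (equivalently $\mathcal{K}_n(k)$ for any $n\geq k-1$). Thus it suffices to prove that $\mathcal{K}_n(n+1)$, once it has been replaced by its acyclic extension at the next stage, is a $\field[\Sigma_{n+1}]$-free resolution of $\field$; more precisely, that $M_n(n+1)=X(\mathcal{K}_n(n+1))$, which is the arity-$(n+1)$ component of $\mathcal{K}_{n+1}$ up to the $(n{+}1)$-isomorphism, is such a resolution.

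First I would establish freeness. By induction: $\mathcal{K}_2=F(M)$ is free on an $\mathbb{S}$-module concentrated in arity $2$ with $M(2)$ free over $\field[\Sigma_2]$, and a standard analysis of the free operad (trees with labelled leaves, nodes decorated by $M$) shows every component $\mathcal{K}_2(k)$ is $\field[\Sigma_k]$-free --- the symmetric group acts freely by permuting the leaf labels since the input profile of each tree is a genuine bijection onto $\{1,\dots,k\}$ and $\field$ is a field so no isotropy can appear from the coefficients. For the inductive step, $\mathcal{K}_{n+1}=\mathcal{K}_n[M_n]$ is the colimit (pushout) in $\mathcal{OP}$ of $\mathcal{K}_n\leftarrow FU(\mathcal{K}_n)\to F(M_n)$; one checks that $F(M_n)$ is $\field[\Sigma_k]$-free in each arity (here $M_n(n+1)=X(\mathcal{K}_n(n+1))$ is $\field[\Sigma_{n+1}]$-free by Proposition \ref{prop-acyclic-complete}, and the other components are the already-free $\mathcal{K}_n(i)$), and that the pushout of free $\mathbb{S}$-modules along the free-operad maps stays free arity by arity --- again using that we are over a field. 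Passing to the filtered colimit preserves freeness, so each $\mathcal{K}(k)$ is $\field[\Sigma_k]$-free.

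Next I would establish acyclicity (augmentation quasi-isomorphism onto $\field$), which is the real content. I argue by induction on arity $k$. For $k=1$: $\mathcal{K}(1)=\mathcal{K}_2(1)$ is the operadic unit $\field$ by construction of the free operad on an arity-$2$ $\mathbb{S}$-module, so it is trivially a resolution of $\field$. For $k=2$: $\mathcal{K}(2)=\mathcal{K}_2(2)=M(2)$ (no trees with more than one node can have $2$ leaves), which is a $\field[\Sigma_2]$-free resolution of $\field$ by hypothesis on $M$. For the inductive step, suppose $\mathcal{K}(i)$ is acyclic for all $i\leq n$; since $\mathcal{K}(i)\cong\mathcal{K}_n(i)$ for $i\leq n$, this means $\mathcal{K}_n(i)$ is acyclic for $i\leq n$, and I must show $\mathcal{K}(n+1)$ is acyclic. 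Now $\mathcal{K}(n+1)$ agrees, up to the $(n{+}1)$-isomorphism $\alpha_{n+1}\circ\cdots$, with the arity-$(n+1)$ component of $\mathcal{K}_{n+1}=\mathcal{K}_n[M_n]$; and since $M_n$ differs from $U(\mathcal{K}_n)$ only by replacing $\mathcal{K}_n(n+1)$ with its acyclic extension $X(\mathcal{K}_n(n+1))$, the point is to understand how the pushout $\mathcal{K}_n[M_n]$ looks in arity $n+1$. The crucial claim --- and this is where I expect the main difficulty --- is that in arity $n+1$ the new operad $\mathcal{K}_{n+1}(n+1)$ is obtained from $X(\mathcal{K}_n(n+1))$ by further freely adjoining decomposable operations of arity $n+1$ built from lower-arity (hence, by induction, acyclic, so quasi-trivial) pieces of $\mathcal{K}_n$, and that this adjunction does not destroy acyclicity. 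Concretely, one filters $\mathcal{K}_{n+1}(n+1)$ by "number of nodes decorated by the genuinely new generators", and identifies the associated graded as a sum of tensor products of $X(\mathcal{K}_n(n+1))$ with (suspensions/shifts of tensor powers of) the lower-arity components $\mathcal{K}_n(i)$, $i\leq n$, which are acyclic by induction; since over a field tensor products of acyclic augmented complexes are acyclic and filtered colimits preserve homology, $\mathcal{K}_{n+1}(n+1)$ is acyclic, hence so is $\mathcal{K}(n+1)$.

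The main obstacle, then, is the bookkeeping in the inductive step: giving a precise description of the pushout $\mathcal{K}_n[M_n]$ in the single critical arity $n+1$ as an explicit (filtered) complex built from $X(\mathcal{K}_n(n+1))$ and the lower components, and checking that the differential respects the filtration so that a spectral-sequence or direct-limit argument applies. Everything else --- freeness, the stabilization via Lemma \ref{lemma-alpha-iso}, compatibility of colimits with homology over a field --- is either already in the excerpt or routine. Once $\mathcal{K}(n)$ is shown to be a $\field[\Sigma_n]$-free resolution of $\field$ for every $n$, Definition \ref{df-Einf-Operad} gives that $\mathcal{K}$ is an $E_\infty$-operad, completing the proof.
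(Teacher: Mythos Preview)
Your stabilization argument via Lemma \ref{lemma-alpha-iso} is the right opening move (modulo an off-by-one: $\alpha_n$ is an $n$-isomorphism, so $\alpha_n(k)$ is an isomorphism for $n\geq k$, giving $\mathcal{K}(k)\cong\mathcal{K}_k(k)$, not $\mathcal{K}_{k-1}(k)$). But you then miss the key simplification and set up machinery that is not needed.

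The point you overlook is that $\mathcal{K}_{n+1}(n+1)$ is \emph{exactly} $X(\mathcal{K}_n(n+1))$, not $X(\mathcal{K}_n(n+1))$ plus freely adjoined decomposables. Here is the arity bookkeeping: an operation of arity $n+1$ in the polynomial operad $\mathcal{K}_n[M_n]$ is represented by a tree of total arity $n+1$ with nodes in $M_n$. If the tree has a single node, that node lies in $M_n(n+1)=X(\mathcal{K}_n(n+1))$. If the tree has at least two nodes, every node has arity $\leq n$ (since arities are $\geq 1$ and a node of arity $n+1$ would already account for all the leaves); but $M_n(j)=\mathcal{K}_n(j)$ for $j\leq n$, so the entire tree lies in $FU(\mathcal{K}_n)$ and the pushout relation collapses it via $\epsilon_{\mathcal{K}_n}$ to an element of $\mathcal{K}_n(n+1)\subset X(\mathcal{K}_n(n+1))$. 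Thus no new decomposables appear in arity $n+1$: they are all absorbed.

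Consequently the paper's proof is two lines: $\mathcal{K}(2)=\mathcal{K}_2(2)=M(2)$, a $\field[\Sigma_2]$-free resolution of $\field$ by hypothesis; and for $n\geq 2$, $\mathcal{K}(n+1)=\mathcal{K}_{n+1}(n+1)=X(\mathcal{K}_n(n+1))$, which is acyclic and $\field[\Sigma_{n+1}]$-free by Proposition \ref{prop-acyclic-complete}. No induction on acyclicity of lower arities, no filtration, no spectral sequence. Your elaborate plan for the ``main obstacle'' is aimed at a difficulty that does not exist once you compute the pushout correctly in the critical arity.
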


	\begin{proof}
		We have $\mathcal{K}(2)=\mathcal{K}_2(2)=M_2$, 
		a $\field[\Sigma_2]$-free resolution of $\field$, 
		and by construction 
		$\mathcal{K}(n+1)=\mathcal{K}_{n+1}(n+1)=M_n=X(\mathcal{K}_n(n+1))$,
		which is acyclic and $\field[\Sigma_{n+1}]$-free.
	\end{proof}

\subsection{$E_\infty$-Structures in $\mathcal{L}$-Algebras}

	Following theorem is the principal objective of this paper,
	it exhibits the nature of main elements of $\mathcal{L}$-algebras
	as $E_\infty$-coalgebras. 
	The outline of the proof is, 
	we start by showing that main element $A[1]$ of an $\mathcal{L}$-algebra $\mathcal{A}$ 
	is a $\mathcal{K}_n$-coalgebra for all $n>1$. 
	Then, using the fact that operad $\mathcal{K}$ 
	is the inductive limit of these operads, 
	$A[1]$ can have an induced $\mathcal{K}$-coalgebra structure, 
	in other words, $A[1]$ is an $E_\infty$-coalgebra.

	\begin{theorem}[Main Theorem]
	\label{main-theorem}
		Let $\mathcal{K}$ be the $E_\infty$-operad defined in \ref{df-operad-K}.
		Then, there exists a functor  $\mathcal{F}$ 
			\begin{equation}
			\begin{gathered}
				\xymatrix{
				\mathcal{L}\text{-Alg} \ar[r]^-{\mathcal{F}} 
				& \mathcal{K}\text{-CoAlg}
				}
			\end{gathered}
			\end{equation}
		which associates a $\mathcal{K}$-coalgebra 
		$\mathcal{F}(\mathcal{A})$ to each $\mathcal{L}$-algebra $\mathcal{A}$ 
		and satisfies the following conditions.
			\begin{enumerate}
				\item The underlying $\dga$ module 
				of $\mathcal{F}(\mathcal{A})$ is $A[1]$.
				\item For every $n\geq 1$, 
				the morphism $\varphi_n:K(n)\otimes A[1]\to A[1]^{\otimes n}$,
				given by the $\mathcal{K}$-coalgebra structure 
				defined on $A[1]$ by $\mathcal{F}$, 
				makes the following diagram commutative up to homotopy,
					\begin{equation}\label{dgm-condition-2-main-th}
					\begin{gathered}
						\xymatrix{
						A[1]^{\otimes n}
						\ar[rr]^-{\mu} 
						& & A[n]\\
						& K(n)\otimes A[1] 
						\ar[ur]_-{s_1(\epsilon \otimes 1)}
						\ar[ul]^-{\varphi_n}
						&
						}
					\end{gathered}
					\end{equation}
				where $\mu$ is given by 
				the structural quasi-isomorphism of $\mathcal{A}$ 
				and $s_1$ is the image
				by $\mathcal{A}$ of the only morphism in $\mathcal{L}$ 
				of the form $([n],\alpha):[n]\to [1]$.
			\end{enumerate}
	\end{theorem}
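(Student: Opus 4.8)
The plan is to build the functor $\mathcal{F}$ by exhibiting $A[1]$ as a $\mathcal{K}_n$-coalgebra for each $n\geq 2$, compatibly with the inclusions $\alpha_n:\mathcal{K}_n\hookrightarrow\mathcal{K}_{n+1}$, and then pass to the colimit. By the universal property of the inductive limit defining $\mathcal{K}$ (Definition \ref{df-operad-K}), a coherent family of operad morphisms $\mathcal{K}_n\to\mathrm{Coend}(A[1])$ induces a unique operad morphism $\mathcal{K}\to\mathrm{Coend}(A[1])$, i.e.\ a $\mathcal{K}$-coalgebra structure on $A[1]$. So the whole problem reduces to the inductive construction of the $\mathcal{K}_n$-coalgebra structures together with the homotopy-commutativity condition \eqref{dgm-condition-2-main-th}.

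\textbf{Base case $n=2$.} Since $\mathcal{K}_2=F(M)$ is free on the $\mathbb{S}$-module $M$ concentrated in arity $2$, an operad morphism $\mathcal{K}_2\to\mathrm{Coend}(A[1])$ is the same (by the adjunction $F\dashv U$) as a morphism of $\mathbb{S}$-modules $M\to U(\mathrm{Coend}(A[1]))$, i.e.\ a single $\Sigma_2$-equivariant $\dga$-morphism $M(2)\otimes A[1]\to A[1]^{\otimes 2}$. We produce this using the Relative Lifting Theorem \ref{th-relative-relevement}: the structural quasi-isomorphism $\mu:A[1]^{\otimes 2}\to A[2]$ is a quasi-isomorphism, and $s_1(\epsilon\otimes 1):M(2)\otimes A[1]\to A[2]$ is a morphism to be lifted through $\mu$. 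On the subcomplex generated in degree $0$ (where $M(2)$ has a $\Sigma_2$-free generator) we lift the degree-$0$ generator to $s_1:A[1]\to A[1]^{\otimes 2}$ — this lift is exact there by the unit axiom — and then extend over all of $M(2)\otimes A[1]$ by Theorem \ref{th-relative-relevement}, using that $M(2)$ is $\field[\Sigma_2]$-free (hence projective) and bounded below and that $C(\mu)$ is acyclic. Equivariance is arranged by lifting one generator per $\Sigma_2$-orbit. This gives $\varphi_2$ and the $n=2$ case of \eqref{dgm-condition-2-main-th} for free.

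\textbf{Inductive step.} Assume the $\mathcal{K}_n$-coalgebra structure $\mathcal{K}_n\to\mathrm{Coend}(A[1])$ is built, satisfying \eqref{dgm-condition-2-main-th} in all arities $\leq n$. Recall $\mathcal{K}_{n+1}=\mathcal{K}_n[M_n]$ with $M_n(i)=\mathcal{K}_n(i)$ for $i\neq n+1$ and $M_n(n+1)=X(\mathcal{K}_n(n+1))$, an acyclic $\field[\Sigma_{n+1}]$-free extension. By Proposition \ref{prop-property-unit-adjunction-poly-op}, to give an operad morphism $\mathcal{K}_{n+1}\to\mathrm{Coend}(A[1])$ it suffices to give a morphism $(f,\overline f):(\mathcal{K}_n,M_n)\to\mathfrak{U}(\mathrm{Coend}(A[1]))$ in $\mathfrak{C}$ — that is, a morphism of $\mathbb{S}$-modules $\overline f:M_n\to U(\mathrm{Coend}(A[1]))$ restricting on $U(\mathcal{K}_n)$ to the already-constructed $U(f)$. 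On all arities $i\neq n+1$ we take $\overline f$ to be $U(f)$; only arity $n+1$ needs attention, where we must extend the map $\mathcal{K}_n(n+1)\otimes A[1]\to A[1]^{\otimes(n+1)}$ over $X(\mathcal{K}_n(n+1))\otimes A[1]$. Again I invoke the Relative Lifting Theorem \ref{th-relative-relevement}: the target condition is homotopy-commutativity of \eqref{dgm-condition-2-main-th} in arity $n+1$, i.e.\ a lift through $\mu:A[1]^{\otimes(n+1)}\to A[n+1]$ of $s_1(\epsilon\otimes 1):X(\mathcal{K}_n(n+1))\otimes A[1]\to A[n+1]$; on the subcomplex $\mathcal{K}_n(n+1)\otimes A[1]$ we already have such a lift (namely $\varphi_{n+1}$ from the $\mathcal{K}_n$-structure, which exists because $\mathcal{K}_n$ contains compositions of lower operations, and \eqref{dgm-condition-2-main-th} for those follows from naturality of $\mu$ and the $\mathcal{L}$-algebra axioms); then $X(\mathcal{K}_n(n+1))$ being $\field[\Sigma_{n+1}]$-free and bounded below lets us extend. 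Finally, Lemma \ref{lemma-alpha-iso} guarantees $\alpha_n:\mathcal{K}_n\to\mathcal{K}_{n+1}$ is an $n$-isomorphism, so the new structure restricts to the old one through $\alpha_n$ — this is exactly the compatibility needed to pass to the colimit.

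\textbf{Functoriality and the main obstacle.} Functoriality of $\mathcal{F}$ on morphisms requires that a morphism $g:\mathcal{A}\to\mathcal{B}$ of $\mathcal{L}$-algebras induce a morphism of $\mathcal{K}$-coalgebras $g_{[1]}:A[1]\to B[1]$ in the weak (up-to-homotopy) sense of Definition preceding, i.e.\ the square with $\varphi_n^{A}$, $\varphi_n^{B}$ commutes up to homotopy; this follows by the same lifting-and-cone argument applied to the comparison of the two composites, using that $g$ commutes with $\mu$ and is a quasi-isomorphism in each arity. The main obstacle I anticipate is \emph{not} the bare existence of the lifts — that is handed to us by Theorems \ref{th-relative-relevement} and the acyclicity of $X(-)$ — but the \emph{coherence bookkeeping}: one must make all the lifts $\Sigma_{n+1}$-equivariant (lifting one generator per orbit and extending equivariantly), compatible with the operad composition $\gamma$ of $\mathcal{K}_{n+1}$ (so that what is built genuinely is an operad morphism, not just an $\mathbb{S}$-module map — this is precisely what Proposition \ref{prop-property-unit-adjunction-poly-op} packages), and compatible with the restriction along $\alpha_n$ so the colimit exists. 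Keeping the homotopies in \eqref{dgm-condition-2-main-th} chosen as honest extensions at each stage (as the ``Moreover'' clause of Theorem \ref{th-relative-relevement} permits) is what makes the induction close.
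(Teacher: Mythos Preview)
Your proposal is correct and follows essentially the same approach as the paper: build compatible $\mathcal{K}_n$-coalgebra structures on $A[1]$ by induction, using the Relative Lifting Theorem \ref{th-relative-relevement} for the base case (the paper takes $L'=0$ rather than seeding with the degree-$0$ generator, but this is cosmetic) and the universal property of polynomial operads (Proposition \ref{prop-property-unit-adjunction-poly-op}) together with Theorem \ref{th-relative-relevement} for the inductive step, then pass to the colimit.

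One small correction in your functoriality paragraph: a general morphism $g:\mathcal{A}\to\mathcal{B}$ of $\mathcal{L}$-algebras is \emph{not} assumed to be a quasi-isomorphism in each arity; what makes the outer square commute up to homotopy is that $\mu_{\mathcal{B}}$ is a quasi-isomorphism (so post-composition with $\mu_{\mathcal{B}}$ reflects null-homotopy on the projective source $K(n)\otimes A[1]$), exactly as in the paper's concluding diagram chase.
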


	\begin{proof}[Proof of theorem \ref{main-theorem}]
		We use the fact that the operad $E_\infty$-operad $\mathcal{K}$ 
		is the inductive limit of the sequence of operads,
			\begin{equation}
			\begin{gathered}
				\mathcal{K}_2 \subset \cdots \subset \mathcal{K}_n \subset \cdots \subset \mathcal{K}
			\end{gathered}
			\end{equation}
		in order to proceed by induction. 
		We first show for all $n\geq 2$ 
		that $A[1]$ has an structure of $\mathcal{K}_n$-coalgebra 
		which satisfies the second condition of the theorem.
		That is, there exists an operad morphism 
		$\overline{F}_n:\mathcal{K}_n\to \text{Coend}(A[1])$,
		such that the associated morphisms
		 $\varphi_i:K_n(i)\otimes A[1]\to A[1]^{\otimes i}$,
		makes the following diagram commutative up to homotopy,
			\begin{equation}\label{dgm-condition-2-main-th}
			\begin{gathered}
				\xymatrix{
				A[1]^{\otimes i}\ar[rr]^-{\mu} & & A[i]\\
				& K_n(i)\otimes A[1] \ar[ur]_-{s_1(\epsilon \otimes 1)}\ar[ul]^-{\varphi_i}&
				}
			\end{gathered}
			\end{equation}
		where $\mu$ is given by the structural quasi-isomorphism of $\mathcal{A}$ 
		and $s_1$ is the image by $\mathcal{A}$ of 
		the only morphism in $\mathcal{L}$ of the form $([i],\alpha):[i]\to [1]$.

		\textbf{Case $\mathcal{K}_2$: } 
		Recall that $\mathcal{K}_2$ is the free operad on the 
		$\mathbb{S}$-module $M_2$ concentrated in arity $2$.
		To show that $A[1]$ is a $\mathcal{K}_2$-coalgebra, 
		we define an $\Sigma_2$-equivariant morphism from $M_2(2)$ to $\text{Coend}(A[1])(2)$ 
		using the relative lifting theorem \ref{th-relative-relevement} in order to 
		satisfy the condition
		on $\mathcal{K}_2$ and then, the $\mathcal{K}_2$-coalgebra 
		structure is obtained as a consequence of 
		the universal property of free operads.

		Defining a $\Sigma_2$ morphism from $M_2(2)$ to $\text{Coend}(A[1])(2)$ 
		is equivalent to define a morphism of $\dga$-$\field[\Sigma_2]$-modules,
			\begin{equation}
			\begin{gathered}
				\overline{\varphi}_2:K_2(2)\otimes A[1]\to A[1]\otimes A[1]
			\end{gathered}
			\end{equation}
		Recall that $M_2(2)=K_2(2)$, 
		the action of $\Sigma_2$ on $A[1]\otimes A[1]$ 
		is the permutation of factors 
		and the action of $\Sigma_2$ on $K_2(2)\otimes A[1]$ 
		maps $x\otimes a$ to $x \sigma \otimes a$.
		Now consider the following diagram,
			\begin{equation}
			\begin{gathered}
				\xymatrix{
				A[1]\otimes A[1]\ar[r]^-{\mu} & A[2] \\
				& K_2(2)\otimes A[1] 
				\ar[u]_-{s_0\circ (\epsilon \otimes 1)} 
				\ar@{..{>}}[ul]^-{\varphi_2}
				}
			\end{gathered}
			\end{equation}
		where $\epsilon$ is the augmentation of $K_2(2)$, 
		$s_0:A[1]\to A[2]$ is the image by $\mathcal{A}$ 
		of the only arrow in $\mathcal{L}$ 
		of the form $([2],\alpha):[2]\to [1]$ 
		and $\mu$ is the structural 
		quasi-isomorphism of $\mathcal{A}$. 

		The $\dga$-$\field[\Sigma_2]$-morphism $\varphi_2$ 
		that makes the diagram commutative up to homotopy 
		is obtained with the theorem \ref{th-relative-relevement} 
		by taking $L'=0$.
		This complete the existence of a $\Sigma_2$-equivariant morphism 
		from $M_2(2)$ to $\text{Coend}(A[1])(2)$ and therefore, 
		we have a morphism $F_2$ of $\mathbb{S}$-modules 
		from $M_2$  to $\text{Coend}(A[1])$,
		which behaves on $M_2(2)$ as $\varphi_2$ 
		and as $0$ on $M_2(i)$, $i\neq2$. 

		Now, consider the following diagram,

		\begin{equation}
		\begin{gathered}
			\xymatrix{
			M_2 \ar@{^{(}->}[r] \ar[rd]_-{F_2} 
			& \mathcal{K}_2=F(M_2) \ar@{-->}[d]^-{\overline{F}_2}\\
			& \text{Coend}(A[1])
			}
		\end{gathered}
		\end{equation}

		where the upper arrow is given by the inclusion of $\mathbb{S}$-modules. 
		The universal property of the free operad $\mathcal{K}_2$ says
		there is an unique morphism of operads $\overline{F}_2$ making 
		the diagram commutative. 
		This morphism $\overline{F_2}$ 
		gives the $\mathcal{K}_2$-coalgebra structure 
		on $A[1]$ that we wanted. 

		\textbf{Case $\mathcal{K}_n$: } 
		Suppose we have a sequence of operad morphisms 
		$\overline{F}_2,\ldots,\overline{F}_{n-1}$, 
		such that, 
		for $i<n$, $\overline{F}_i:\mathcal{K}_i\to \text{Coend}(A[1])$ 
		and $\overline{F}_i$ satisfies the second condition of the theorem.
		As we have seen in the construction of $K_i$'s, 
		the operad $\mathcal{K}_{n-1}$ as $\mathbb{S}$-module,
		can be embedded as a direct factor in a $\mathbb{S}$-module $M_{n-1}$ 
		with component $M_{n-1}(n)$ acyclic and $\field[\Sigma_n]$-free.
		Then we have $M_{n-1}=\mathcal{K}_{n-1}\oplus P$, 
		where the component $P(j)$ is $\field[\Sigma_j]$-free for $j>0$ .

		Observe that this defines an 
		object $(\mathcal{K}_{n-1},M_{n-1})$ 
		and morphism in $\mathfrak{C}$,

		\begin{equation}
		\begin{gathered}
			F_{n-1}:(K_{n-1},M_{n-1})\to(\text{Coend}(A[1]),U(\text{Coend}(A[1])))
		\end{gathered}
		\end{equation}

		which behaves as $\overline{F}_{n-1}$ 
		on $\mathcal{K}_{n-1}$ and as $0$ on $P$. 
		In order to satisfy the
		second condition of the theorem we focus our attention 
		in the $\Sigma_j$-equivariant 
		morphism given by $\overline{F}_{n-1}$ 
		on the component $\mathcal{K}_{n-1}(j)$, $j>0$. 
		We will extend this morphism on the components $M_{n-1}(j)$ 
		or equivalently, 
		define a $\dga$-$\field[\Sigma_j]$ morphism $\overline{\phi}_j$ 
		from $M_{n-1}(j)\otimes A[1]$ to $A[1]^{\otimes j}$. 
		In order to do that, consider the diagram,

		\begin{equation}
		\begin{gathered}
			\xymatrix{
			A[1]^{\otimes j}\ar[rr]^-\mu & & A[j] \\
			& M_{n-1}(j)\otimes A[1]  
			\ar@{..>}[ul]_-{\overline{\phi}_j}  
			\ar[ur]^-{s_0\circ (\epsilon \otimes 1)}& \\
			& K_{n-1}(j)\otimes A[1] 
			\ar@/^1pc/[uul]^-{\phi_j} 
			\ar@{^{(}->}[u] 
			\ar@/_1pc/[uur]_-{s_0\circ (\epsilon \otimes 1)}& 
			}
		\end{gathered}
		\end{equation}

		where $\phi_j$ is the $\field[\Sigma_n]$-morphism 
		induced by $\overline{F}_{n-1}$.
		By hypothesis, the morphism $\phi_j$ makes commutative 
		up to homotopy the outer triangle of the diagram.
		Then the existence of $\overline{\phi}_j$ 
		follows after applying the relative lifting theorem \ref{th-relative-relevement}
		with $L=M_{n-1}(j)\otimes A[1]$ and $L'=K_{n-1}(j)\otimes A[1]$.

		Observe that the $\Sigma_j$-equivariant 
		morphism from $M_{n-1}(j)$ to $\text{Coend}(A[1])(j)$ 
		induced by $\overline{\phi}_j$ 
		behaves like $\overline{F}_{n-1}$ over $K_{n-1}(j)$. 
		Denote by $F_{n-1}$ the morphism of 
		$\mathbb{S}$-modules given by this data.
		Then $(\overline{F}_{n-1},F_{n-1}):
		(\mathcal{K}_{n-1},M_{n-1})\to (\text{Coend}(A[1]),$ 
		$U(\text{Coend}(A[1])))$ 
		is a morphism of $\mathfrak{C}$, 
		and consider the following diagram.

			\begin{equation}
			\begin{gathered}
				\xymatrix{
				(\mathcal{K}_{n-1},M_{n-1}) 
				\ar[rd]_-{F_{n-1}} 
				\ar[r]^-{\Psi}& 
				(\mathcal{K}_{n-1}[M_{n-1}],U(\mathcal{K}_{n-1}[M_{n-1}])) 
				\ar@{-->}[d]^-{(\overline{F}_n,U(\overline{F}_n)}\\
				& (\text{Coend}(A[1]),U(\text{Coend}(A[1])))
				}
			\end{gathered}
			\end{equation}

		The operad morphism $\overline{F}_{n}$ 
		making the diagram commutative follows 
		by proposition \ref{prop-property-unit-adjunction-poly-op}.
		Then, $\overline{F}_n$ gives 
		the $\mathcal{K}_n$-coalgebra structure on $A[1]$ needed
		to complete the inductive step.

		\textbf{$A[1]$ is a $\mathcal{K}$-coalgebra: }
		We now proceed with the final part of the proof 
		and exhibit $A[1]$ as a $\mathcal{K}$-coalgebra. 
		Consider the following cocone of operads
		onthe diagram given by the operads 
		$\{\mathcal{K}_i\}_{i\geq 2}$.

			\begin{equation}
			\begin{gathered}
				\xymatrix{
				\mathcal{K} 
				\ar@{-->}[rrrr]^-{\overline{F}} 
				&&&& \text{Coend}(A[1])\\
				&&&\\
				*++{\mathcal{K}_2} 
				\ar[uurrrr]_-{\overline{F}_2} 
				\ar[uu] \ar@<-0.5ex>@{^{(}->}[r]  & 
				\cdots 
				\ar@{}[uurrr]_-{\cdots} 
				\ar@<-0.5ex>@{^{(}->}[r] 
				\ar@{}[uul]^-{\cdots}& 
				*++{\mathcal{K}_n} 
				\ar@<-0.5ex>@{^{(}->}[r] 
				\ar[uurr]_-{\overline{F}_n} 
				\ar[uull]& \cdots
				\ar@{}[uur]_-{\cdots}
				}
			\end{gathered}
			\end{equation}

		The inductive part of the proof 
		exhibited the operad $\text{Coend}(A[1])$, 
		together with the morphisms $\overline{F}_n$'s,
		as a cocone on the $\mathcal{K}_i$'s. 
		By definition $\mathcal{K}$ 
		is also a cocone on the $\mathcal{K}_i$'s. 
		Then the universal property of colimits says 
		that there exists an unique morphism 
		of operads $\overline{F}$
		from $\mathcal{K}$ to $\text{Coend}(A[1])$ 
		commutative on these two cocones.
		The morphism $\overline{F}:\mathcal{K}\to \text{Coend}(A[1])$ 
		exhibit $A[1]$ as an $\mathcal{K}$-coalgebra
		with the conditions stated by the theorem.

		\textbf{Functoriality:} 
		Let $f:\mathcal{A}\to \mathcal{B}$ 
		be a morphism of $\mathcal{L}$ and consider
		the following diagram.

			\begin{equation}
			\begin{gathered}
				\xymatrix{
				A[n]^{\otimes n} 
				\ar[rd]_-{\mu^{\mathcal{A}}}
				\ar[rrr]^-{f^{\otimes n}_1}
				&&& 
				B[n]^{\otimes n} 
				\ar[dl]^-{\mu^\mathcal{B}}\\
				&A[n]\ar[r]^-{f_n}&B[n]&\\
				K[n]\otimes A[1] 
				\ar[rrr]_-{1\otimes f_1}
				\ar[ru]^-{s(\epsilon\otimes 1)} 
				\ar@/^1pc/[uu]^-{\varphi_n^{\mathcal{A}}}
				&&& 
				K[n]\otimes B[1] 
				\ar[lu]_-{s(\epsilon\otimes 1)} 
				\ar@/_1pc/[uu]^-{\varphi_n^{\mathcal{B}}}
				}
			\end{gathered}
			\end{equation}

		The two triangles are commutative up to homotopy 
		by the second condition of the theorem and 
		the inner diagrams are commutative because 
		$f$ is a morphism of $\mathcal{L}$-algebras. 
		The commutative up to homotopy of the outer diagram 
		follows from this and the fact that 
		$\mu$ is a quasi-isomorphism. 
		This shows that our construction is 
		functorial and completes the proof.
		\end{proof}

\section{Funding}

This work was supported by Universidad de Costa Rica [grant number OAICE-08-CAB-144-2010]
and by the Vicerrector\'ia de Investigaci\'on de la Universidad de Costa Rica through the project 821-B6-A19.
I thank Alain Prout\'e for encouraging me to think about this problem.

\bibliographystyle{amsplain}
\bibliography{principal-bibliography}

\end{document}